\documentclass[11pt,a4paper]{amsart}
\usepackage[T1]{fontenc}
\usepackage{lmodern}
\usepackage[margin=28mm]{geometry}
\usepackage{amsmath,amssymb,mathtools,mathrsfs,booktabs,array,microtype}
\usepackage{cite}
\usepackage[hidelinks,unicode]{hyperref}
\usepackage[nameinlink,noabbrev]{cleveref}
\crefname{lemma}{lemma}{lemmas}
\Crefname{lemma}{Lemma}{Lemmas}
\crefname{proposition}{proposition}{propositions}
\Crefname{proposition}{Proposition}{Propositions}
\crefname{corollary}{corollary}{corollaries}
\Crefname{corollary}{Corollary}{Corollaries}
\crefname{theorem}{theorem}{theorems}
\Crefname{theorem}{Theorem}{Theorems}
\crefname{remark}{remark}{remarks}
\Crefname{remark}{Remark}{Remarks}
\hypersetup{pdftitle={Completed cycle transfer and the conifold gap for (P2,E)},pdfauthor={Xiaobin Li},pdfsubject={Completed cycle regularity and the Nekrasov-Shatashvili conifold boundary condition},pdfkeywords={conifold gap, Gromov-Witten, completed cycles, NS limit}}
\numberwithin{equation}{section}
\newtheorem{theorem}{Theorem}[section]
\newtheorem{proposition}[theorem]{Proposition}
\newtheorem{lemma}[theorem]{Lemma}
\newtheorem{corollary}[theorem]{Corollary}
\theoremstyle{definition}

\theoremstyle{remark}
\newtheorem{remark}[theorem]{Remark}
\newcommand{\PP}{\mathbb P}
\newcommand{\CC}{\mathbb C}
\newcommand{\QQ}{\mathbb Q}

\newcommand{\h}{\hbar}
\newcommand{\dd}{\partial_t}

\newcommand{\FP}{\operatorname{FP}_{t=0}}
\newcommand{\Pol}{\operatorname{PP}_{t=0}}
\newcommand{\Aut}{\operatorname{Aut}}

\newcommand{\cC}{\mathcal C}
\newcommand{\cB}{\mathcal B}
\newcommand{\cN}{\mathcal N}
\newcommand{\cL}{\mathscr L}
\newcommand{\bR}{\overline R}
\newcommand{\bU}{\overline U}
\newcommand{\mA}{\mathsf A}
\newcommand{\mB}{\mathsf B}
\newcommand{\mC}{\mathsf C}
\newcommand{\bracket}[1]{\left\langle #1\right\rangle_{\rho}}
\DeclareMathOperator{\Dur}{Dur}

\allowdisplaybreaks[1]
\title[Completed cycle transfer and the NS conifold gap]{Completed cycle transfer and the conifold gap\protect\\ for $(\mathbb P^2,E)$}
\author{Xiaobin Li}

\thanks{School of Mathematics, Southwest Jiaotong University, Chengdu, China. Corresponding author: Xiaobin Li; e-mail: \texttt{lixiaobin@home.swjtu.edu.cn}.}
\date{\today}
\subjclass[2020]{Primary 14N35; Secondary 05A17, 11F11, 81T30.}
\keywords{Conifold gap, relative Gromov-Witten invariants, completed cycles, Nekrasov-Shatashvili limit, partitions, quasimodular forms}
\begin{document}
\begin{abstract}
Conifold boundary conditions constrain higher genus topological string
free energies, but the cancellation of subleading poles is not implied by
their leading singular behavior. We prove a regularity theorem for connected
elliptic completed cycle traces and use it to establish the
Nekrasov-Shatashvili conifold gap for the maximal contact relative
Gromov-Witten theory of the projective plane with a smooth cubic boundary
and a top Hodge class insertion. The higher genus local/relative
correspondence expresses the ordinary potential in terms of the relative
potential and differentiated lower genus contributions. We identify its
conifold polarized correction with an elliptic trace. A central difference
identity resums each non-vacuum partition weight into a content product
multiplied by a regular exponential. For arbitrary regular parts of the
input potentials and any nome with a simple zero, the normalized
non-vacuum connected contribution is divisible by the conifold coordinate
and by the square of the coupling. The entire polar part and finite part
are consequently determined by the vacuum constants. A triangular
induction transfers the known ordinary local projective plane gap to the
relative theory. We also obtain finite part identities, finite formulas
for regular Taylor coefficients, and reconstruction of the holomorphic
ambiguities. The argument is formal in the coupling and gives holomorphic
remainders at every fixed genus.
\end{abstract}
\maketitle

\section{Introduction and main results}\label{sec:intro}

The conifold gap is a local boundary condition on higher genus topological
string free energies. In a specified vanishing period coordinate and
polarization, it permits one universal singular monomial and excludes all
other negative powers. Together with the holomorphic anomaly equations of
Bershadsky-Cecotti-Ooguri-Vafa \cite{BCOV}, polynomiality
\cite{YY} and modularity \cite{ABK,ASYYZ,HKQ}, it is a central input to
direct integration. The light-state interpretation of conifold singularities
was developed by Strominger \cite{Strominger}, while Ghoshal-Vafa
\cite{GV} identified the universal conifold sector with the self-dual
$c=1$ theory. These physical descriptions motivate the singular model;
they are distinct from a proof of the full gap for a given enumerative
theory. In the refined setting, the two-parameter
partition function and its Nekrasov-Shatashvili limit have their gauge theoretic
counterparts in the $\Omega$-background \cite{NO,NS,IKV,HK,KW}.
These connections make the \emph{complete} polar statement, rather than only
its leading coefficient, a natural problem in mathematical physics.

An explicit gap condition appears in the 2006 preprints of Huang-Klemm
\cite{HKGauge} and Huang-Klemm-Quackenbush \cite{HKQ}, published in 2007
and 2009, respectively. Its role in fixing holomorphic ambiguities was
subsequently developed in the direct-integration work of
Grimm-Klemm-Mari\~no-Weiss \cite{GKMW} and in the local Calabi-Yau
analysis of Haghighat-Klemm-Rauch \cite{HKR}. In these uses of the
holomorphic anomaly equations, imposing a gap as boundary data must be
distinguished from deriving it independently.

Let $E\subset\PP^2$ be a smooth cubic. Bousseau-Fan-Guo-Wu
\cite{BFGW} constructed a higher genus local/relative correspondence,
proved finite generation and a holomorphic anomaly equation for the
maximal contact relative theory of $(\PP^2,E)$ with a top Hodge class
insertion, and formulated its conifold gap. In their normalization this
theory realizes the relevant Nekrasov-Shatashvili sector of local
$\PP^2$. We denote its conifold potential by $R_g$ and the ordinary local
$K_{\PP^2}$ potential by $U_g$. We prove the relative gap by identifying
and controlling the elliptic completed cycle trace in their geometric
correspondence; neither potential is defined by imposing a boundary
condition.

The obstacle is that this correspondence contains products of high
derivatives of lower genus potentials. Individual terms may have
subleading poles, and even the regular parts of lower genus potentials
can affect those poles before the terms are summed. We keep every such
regular term. After separating the constants of the shifted power sums,
a central difference identity converts the remaining exponential weights
into content products. Every nonempty partition contains a zero content
box, and the normalized connected remainder therefore vanishes at the
conifold. This controls both the full principal part and the finite part.
Here "vacuum" refers precisely to the empty partition constants of the
elliptic trace, not to an additional physical sector imposed on the
relative theory.

\subsection{The relative gap}
Put
\begin{equation}\label{eq:coord-intro}
 \xi=1+27q,\qquad
 \phi(\xi)={}_2F_1\left(\frac13,\frac23;1;\xi\right),\qquad
 t=\frac1{\sqrt3}\int_0^\xi\frac{\phi(s)}{1-s}\,ds.
\end{equation}
The branch is fixed by $\phi(0)=1$. The potentials below use the normalized
conifold polarization described in \cref{sec:geometry}, not the naive
substitution of a large radius power series into $q=(\xi-1)/27$.
For $g\ge2$, define
\begin{equation}\label{eq:coefficients}
 u_g=\frac{B_{2g}}{2g(2g-2)},\qquad
 r_g=-\left(1-2^{1-2g}\right)
       \frac{|B_{2g}|}{2g(2g-1)(2g-2)}.
\end{equation}
Here $B_{2g}$ are Bernoulli numbers, with $B_2=1/6$.

\begin{theorem}[Relative conifold gap]\label{thm:main}
For the maximal contact relative theory of $(\PP^2,E)$, in the coordinate
\eqref{eq:coord-intro} and the conifold polarization of
\cref{sec:geometry}, one has, for every $g\ge2$,
\begin{equation}\label{eq:main-gap}
 R_g(t)=r_g t^{2-2g}+H_g(t),\qquad H_g\in\CC\{t\}.
\end{equation}
In particular, the coefficients of $t^{-j}$ vanish for
$1\le j\le2g-3$.
\end{theorem}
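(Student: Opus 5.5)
The proof runs by strong induction on $g$. It combines the higher genus local/relative correspondence of Bousseau--Fan--Guo--Wu \cite{BFGW} with the known conifold gap for local $\PP^2$: for every $g\ge2$, $U_g(t)=u_g t^{2-2g}+(\text{regular})$ in the same coordinate \eqref{eq:coord-intro} and the same polarization. We write the correspondence, in the normalized conifold polarization of \cref{sec:geometry}, as
\begin{equation*}
 U_g = c\,R_g + \mathcal T_g\bigl(R_1,\dots,R_{g-1},U_1,\dots,U_{g-1}\bigr),
\end{equation*}
where $c$ is an explicit nonzero normalization constant and $\mathcal T_g$ is the genus $g$ coefficient of the connected elliptic completed cycle trace. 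The argument of $\mathcal T_g$ consists of $t$-derivatives of lower genus potentials. We regard $\mathcal T_g$ as an $\hbar$-expansion of a trace over partitions $\lambda$, weighted by exponentials of shifted power sums built from the lower genus inputs. We split it as $\mathcal T_g=\mathcal T_g^{\mathrm{vac}}+\mathcal T_g^{\ne}$, according to the empty partition and the non-vacuum partitions after taking connected parts.

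The key step is the regularity statement for $\mathcal T_g^{\ne}$. For each nonempty $\lambda$, the central difference identity rewrites the exponential weight $\exp\bigl(\sum_k \cdots p_k(\lambda)\bigr)$ as a product over boxes $\square\in\lambda$ of factors of the shape $2\sinh\bigl(\tfrac{\hbar}{2}\,c(\square)\,\partial_t\bigr)$ applied to the genus-zero-like data, times a regular exponential. A box of content zero contributes a factor that vanishes with the nome. If the nome has a simple zero at $t=0$, this factor supplies a factor $t$ and, by the symmetry $\hbar\to-\hbar$ of central differences, also $\hbar^2$. The claim to establish is that $\mathcal T_g^{\ne}$ is $t\hbar^2$ times a series that is regular in $t$ at every fixed genus. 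This must hold for \emph{arbitrary} regular parts of the inputs, so that the induction hypothesis (singular parts of the $R_h,U_h$ fixed, regular parts unknown) is sufficient. This is the main obstacle. The danger is that high derivatives $\partial_t^k$ of the polar terms $r_h t^{2-2h}$ and $u_h t^{2-2h}$ could generate subleading poles. The resummation must show that, after normalization by the vacuum, these poles cancel. I would try to prove this first at the level of formal $\hbar$-series with symbolic regular parts, checking that the content-product form is preserved under taking connected parts via the logarithm.

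Given regularity, both $\mathcal T_g^{\ne}$ and its finite part at $t=0$ are harmless, so the polar and finite parts of $\mathcal T_g$ come entirely from the vacuum constants, which are explicit Bernoulli-type expressions. Comparing polar parts of $U_g$ and $c\,R_g$ then gives
\begin{equation*}
 c\,\Pol R_g = u_g t^{2-2g} - \Pol\mathcal T_g^{\mathrm{vac}}.
\end{equation*}
The triangular structure of the correspondence guarantees that only $R_h$ with $h<g$ enter $\mathcal T_g$, so the induction closes. The base cases come from the genus $0$ and $1$ conifold behavior supplied in \cref{sec:geometry}. The remaining task is to evaluate the vacuum contribution. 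It is a single monomial $\propto t^{2-2g}$ whose coefficient combines with $u_g$ into $c\,r_g$. Concretely, I expect this to be the Bernoulli identity relating $B_{2g}/(2g(2g-2))$ to the factor $(1-2^{1-2g})|B_{2g}|/(2g(2g-1)(2g-2))$, coming from the $\sinh$ versus $\sin$ generating functions $\tfrac{x/2}{\sinh(x/2)}$. This yields \eqref{eq:main-gap} with $H_g$ convergent because each fixed-genus remainder is a finite combination of holomorphic germs.
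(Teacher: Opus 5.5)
Your outline follows the paper's proof: the triangular identity $U_g=(-1)^gR_g+\cC_g$ (where $\cC_g$ depends only on $R_0,\dots,R_{g-1}$, not on the $U_h$), Brini's ordinary gap as external input, separation of the vacuum constants $V$, regularity of $\cC-V$ for arbitrary regular parts via a content product with a zero-content box, and the Bernoulli convolution $-(2g-3)!\sum_{j=1}^g b_jb_{g-j}=u_g-(-1)^gr_g$ coming from $\cB(z)^2=z^2e^z/(e^z-1)^2$. Your statement of the content step needs correcting: the lower genus singular parts assemble into $\Phi=\cB(\h\dd)f_0$, whose identity $\Phi'(t+\h/2)-\Phi'(t-\h/2)=\h\log t$ telescopes along rows to give $\rho^{|\lambda|}e^{B_\lambda}=v(t)^{|\lambda|}\prod_{\square\in\lambda}(t+\h c(\square))\,e^{\widetilde K_\lambda}$ with linear factors (not factors $2\sinh(\tfrac{\h}{2}c(\square)\dd)$, which would vanish identically at content zero), and the factor $\h^2$ comes from $Z_A(t,0)=P(\rho)$ together with conjugation symmetry rather than from the zero-content box.
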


Theorem~\ref{thm:main} proves \cite[Conjecture~1.9]{BFGW}. The result
concerns the actual relative invariants, in the normalization specified
below; the relative potential is not defined by imposing a gap.

For the ordinary local $\PP^2$ theory, Coates-Iritani established its
modular and Fock-sheaf description and verified the conifold gap by explicit
computer-algebra calculations for $2\leq g\leq7$
\cite[Remark~1.1.3 and Section~10.8, equation~(127)]{CI}.
This is finite-genus verification, not an all-genus gap theorem.

The ordinary input used here is the all-genus theorem in Andrea Brini's
2025 preprint \emph{Conifold gap and all-genus mirror symmetry for local
$\PP^2$}
(\href{https://arxiv.org/abs/2509.19298v2}{arXiv:2509.19298v2},
Corollary~4.4). After the exact period normalization proved in
\cref{lem:period-scale}, it states
\begin{equation}\label{eq:ordinary-input}
 U_g(t)=u_g t^{2-2g}+O(1),\qquad g\ge2.
\end{equation}
We use this theorem as an external input. The task here is to transfer
its full polar statement through the local/relative correspondence,
not to reprove the ordinary gap. Brini's statistical-mechanical construction
is related to the operator-based matrix models of Mari\~no-Zakany
\cite{MZ}, which reproduce conifold expansions in examples; his
Section~1.2.2 explains this relationship. The matrix-model comparison
provides context for the ordinary input and is not an additional assumption
in the completed cycle transfer below.

\subsection{The formal regularity mechanism}
Set
\begin{equation}\label{eq:universal-model}
 f_0(t)=\frac12t^2\log t-\frac34t^2,\qquad
 \cB(z)=\frac{z}{2\sinh(z/2)}=\sum_{j\ge0}b_jz^{2j},\qquad
 \Phi=\cB(\h\dd)f_0.
\end{equation}
Thus $b_j=B_{2j}(1/2)/(2j)!$. For a partition $\lambda$, write
$c(i,j)=j-i$ for the content of a box and set
\begin{equation}\label{eq:shifted-intro}
 p_k(\lambda)=\sum_{i\ge1}
 \left[\left(\lambda_i-i+\frac12\right)^k-
       \left(-i+\frac12\right)^k\right]
 +(1-2^{-k})\zeta(-k).
\end{equation}
The finite difference in this formula has finite support. Define
\begin{equation}\label{eq:bracket-intro}
 P(\rho)=\sum_\lambda\rho^{|\lambda|},\qquad
 \bracket{F}=P(\rho)^{-1}\sum_\lambda\rho^{|\lambda|}F(\lambda).
\end{equation}
For a series $\widehat R=\sum_{r\ge0}(-1)^r\h^{2r}R_r$, let
\begin{equation}\label{eq:times-intro}
 x_a=\h^a\left(\dd^{a+2}\widehat R-\delta_{a0}R_0''\right),
 \qquad
 \cC=\log\bracket{\exp\left(\sum_{a\ge0}
                   \frac{x_ap_{a+1}}{(a+1)!}\right)}.
\end{equation}
The subtraction in $x_0$ is essential.

\begin{theorem}[Strong completed cycle regularity]\label{thm:strong-intro}
Suppose
\[
 \widehat R=\Phi+A,\qquad A\in\CC[[t,\h^2]],\qquad
 \rho=t\,v(t),\quad v(0)\ne0.
\]
Set
\begin{equation}\label{eq:vacuum-intro}
 V=\sum_{j\ge1}b_j\h^{2j-2}\dd^{2j}\widehat R-b_1R_0''.
\end{equation}
Then the trace in \eqref{eq:times-intro} is coefficientwise well defined and
\begin{equation}\label{eq:strong-intro}
 \cC-V\in t\h^2\CC[[t,\h^2]].
\end{equation}
The same statement holds over any commutative $\QQ$-algebra, with $v(0)$ a
unit, using the normalized regular exponential in \cref{eq:normalized-K}.
\end{theorem}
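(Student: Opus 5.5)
The plan is to resum the exponent $\sum_a x_a p_{a+1}/(a+1)!$ partition by partition into a content product times a regular exponential. The key fact is that every nonempty $\lambda$ contains the box $(1,1)$, whose content is $c=0$. We split $\widehat R=\Phi+A$ and treat the two pieces separately.

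\emph{Singular piece.} From $f_0''=\log t$ we get
\[
 \dd^{a+2}\Phi=\cB(\h\dd)\,\dd^{a}\log t .
\]
The $\Phi$-part of the exponent is therefore
\[
 \sum_{a\ge1}\frac{\h^a}{(a+1)!}\,p_{a+1}(\lambda)\,\cB(\h\dd)\dd^a\log t ,
\]
together with the $a=0$ term, whose $R_0''$-part is removed by the subtraction in $x_0$. This is a Taylor expansion in $\h$ of finite differences of $\log t$, evaluated at the shifted parts $\lambda_i-i+\tfrac12$. The operator $\cB(z)=z/(2\sinh(z/2))$ is exactly the inverse of the central difference $2\sinh(\h\dd/2)$ relative to $\h\dd$. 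Consequently, on the finitely supported difference in \eqref{eq:shifted-intro}, the sum telescopes row by row into
\[
 \sum_{\square\in\lambda}\log\bigl(t+\h c(\square)\bigr)
\]
plus the term $-|\lambda|\log t$ coming from the $p_1$ contribution.

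The central difference identity is this statement: the telescoping is performed along each row $i$, from $-i+\tfrac12$ to $\lambda_i-i+\tfrac12$, with half-integer midpoints. The constants $(1-2^{-k})\zeta(-k)$ are exactly the empty-partition values, and they produce $V$. So $\Phi$ contributes $e^{V_\Phi}\,t^{-|\lambda|}\prod_{\square}(t+\h c(\square))$. Combined with $\rho^{|\lambda|}=t^{|\lambda|}v^{|\lambda|}$, the $\lambda$-weight becomes
\[
 v^{|\lambda|}\prod_{\square\in\lambda}\bigl(t+\h c(\square)\bigr),
\]
which is regular in $t$.

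\emph{Regular piece.} The $A$-part of the exponent is $\sum_a \h^a \dd^{a+2}A\,p_{a+1}/(a+1)!$ with $A\in\CC[[t,\h^2]]$, minus the $a=0$ subtraction. After removing its vacuum constants (the $A$-part of $V$), it is a regular exponential $K_\lambda(t,\h)$ of the shape in \cref{eq:normalized-K}, with $K_\emptyset=1$. This shows that the trace is coefficientwise well defined: each $\lambda$ contributes a power series in $t$ and $\h$, and the factor $\prod(t+\h c)$ forces the contributions of large $|\lambda|$ into high total degree in $(t,\h)$.

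\emph{Conclusion.} We obtain
\[
 \cC-V=\log\frac{1+\sum_{\lambda\ne\emptyset}w_\lambda}{P(\rho)},\qquad
 w_\lambda=v^{|\lambda|}K_\lambda\prod_{\square\in\lambda}\bigl(t+\h c(\square)\bigr).
\]
Every $w_\lambda$ with $\lambda\ne\emptyset$ is divisible by $t$, via the box $(1,1)$. Since $P(\rho)=1+O(t)$, it follows that $\cC-V\in t\,\CC[[t,\h^2]]$; evenness in $\h$ should follow from the symmetry $\lambda\mapsto\lambda'$, which sends $c\mapsto-c$.

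For divisibility by $\h^2$ we set $\h=0$. Then $\prod_\square(t+\h c)\to t^{|\lambda|}$, and $K_\lambda\to\exp(A_0''\,|\lambda|)$ up to the constant $-\tfrac1{24}$, which has been absorbed into $V$. Here the subtraction of $R_0''$ in $x_0$ is what cancels the $\log t$ and $A_0''$ contributions, so that $K_\lambda|_{\h=0}=1$. The numerator is then $\sum_\lambda (tv)^{|\lambda|}=P(\rho)$, which matches the denominator exactly. Hence $\cC-V$ vanishes at $\h=0$, and by evenness it lies in $t\h^2\CC[[t,\h^2]]$.

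The main obstacle is the central difference identity itself. One must check carefully that the $\cB$-weighted Taylor series in $\h\dd$ reproduces exactly $\sum_\square\log(t+\h c(\square))$, with the half-integer shifts and the $\zeta$ constants matching $V$ term by term, including the $j=1$ correction $-b_1R_0''$. I would first verify this on one-row partitions $\lambda=(n)$ using $\log\Gamma$-type telescoping, and then extend row by row. For a general $\QQ$-algebra, the same argument goes through formally once $v(0)$ is a unit.
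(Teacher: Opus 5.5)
Your proposal is correct and follows essentially the same route as the paper. The common steps are: vacuum separation through the constants $p_k(\varnothing)$; the central difference $2\sinh(\h\dd/2)\cB(\h\dd)=\h\dd$, telescoped along rows to give the content product; a normalized regular exponential that vanishes at $\h=0$, so the $\h=0$ numerator equals $P(\rho)$; the content-zero box $(1,1)$ for divisibility by $t$; and conjugation $\lambda\mapsto\lambda'$ for evenness. Two small points need tidying. First, ``$K_\varnothing=1$'' should read $\widetilde K_\lambda\in\h\CC[[t,\h]]$, i.e.\ $e^{\widetilde K_\varnothing}=1$. Second, your evenness step needs $\widetilde K_\lambda(t,-\h)=\widetilde K_{\lambda'}(t,\h)$, which follows from the evenness of $A$ and $\sum_i(e^{zu_i}-e^{zv_i})=2\sinh(z/2)\sum_{\square\in\lambda}e^{zc(\square)}$; equivalently, use $p_k(\lambda')=(-1)^{k-1}p_k(\lambda)$ as the paper does.
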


Every nonempty partition contains a box of content zero. The factor $t$ in
\eqref{eq:strong-intro} comes from that elementary fact, after the entire
shifted-power-sum exponent has been resummed into a content product.
Individual terms in the original local/relative graph expansion need not
satisfy a gap. Moreover, all regular parts of the lower genus potentials are
retained in the resummation.

The proof uses shifted symmetric functions and partition traces
\cite{OO,BO,Okounkov,OP}, together with the content product formalism
\cite{HO}. Its essential additional ingredient is the choice of the singular
kernel $\cB$: a central difference converts its shifted-power-sum exponent
into a polynomial content product. This gives the strong regularity
\eqref{eq:strong-intro} while retaining every regular term of the input
potentials.

\subsection{Transfer and finite parts}
The geometric identity proved in \cref{prop:geometric-transfer} is
\begin{equation}\label{eq:geom-intro}
 U_g=(-1)^gR_g+\cC_g,\qquad
 \cC_g=[\h^{2g-2}]\cC.
\end{equation}
Each insertion in \eqref{eq:times-intro} has strictly positive coupling
degree, and $\cC_g$ depends only on $R_0,\ldots,R_{g-1}$. This triangularity
allows an induction without assuming the unknown genus $g$ gap.

\begin{theorem}[Gap transfer]\label{thm:transfer-intro}
Suppose $R_g,U_g$ are meromorphic germs for $g\ge2$ and satisfy \eqref{eq:geom-intro} and
\eqref{eq:times-intro}, the nome has a simple zero, and
\[
 R_0=f_0+\text{holomorphic},\qquad
 R_1=\frac1{24}\log t+\text{holomorphic}.
\]
Then the ordinary all genus gap $U_g=u_gt^{2-2g}+O(1)$ is equivalent to the
relative all genus gap $R_g=r_gt^{2-2g}+O(1)$.
\end{theorem}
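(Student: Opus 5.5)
We argue by strong induction on $g\ge2$, using \cref{thm:strong-intro} as the only analytic input. Fix $g\ge2$ and suppose the relative gap $R_h=r_ht^{2-2h}+O(1)$ holds for $2\le h<g$; under the ordinary-to-relative direction this is the inductive hypothesis, and in the converse direction we assume the relative gap for all genera and deduce the ordinary one. Since $\cC_g=[\h^{2g-2}]\cC$ depends only on $R_0,\dots,R_{g-1}$ (each insertion $x_a$ has positive coupling degree), the genus-$g$ identity $U_g=(-1)^gR_g+\cC_g$ shows that the two gap statements at genus $g$ are equivalent once we know that
\[
 \cC_g=c_g t^{2-2g}+O(1),\qquad c_g:=u_g-(-1)^g r_g ,
\]
for a universal constant $c_g$ that does not depend on the regular parts of $R_0,\dots,R_{g-1}$. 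The whole proof therefore reduces to computing the principal part of $\cC_g$.

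To apply \cref{thm:strong-intro} we write the truncation $\sum_{r<g}(-1)^r\h^{2r}R_r$ in the form $\Phi+A$. The key point is that the universal pieces of $R_0,\dots,R_{g-1}$ should be exactly the coefficients of $\Phi=\cB(\h\dd)f_0$. Concretely, $[\h^{2r}]\Phi=b_r\dd^{2r}f_0$, which for $r\ge2$ equals a constant multiple of $t^{2-2r}$, and this constant should match $(-1)^r r_r$. For $r=1$ we get $b_1\dd^2f_0=-\tfrac1{24}\log t+\text{const}$, which matches $-R_1=-\tfrac1{24}\log t+\text{holomorphic}$. For $r=0$ we get $f_0$ itself. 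I expect this matching to be a Bernoulli-number identity relating $B_{2r}(1/2)=-(1-2^{1-2r})B_{2r}$ to $r_r$; the factor $(2r)!$ in $b_r$ together with $\dd^{2r}t^2\log t$ should reproduce the denominator $2r(2r-1)(2r-2)$. Given this identity and the inductive hypothesis, $A\in\CC[[t,\h^2]]$ modulo $\h^{2g}$. Since $\cC_g$ only sees $\h$-degrees below $2g$, the higher-order terms of $A$ can be chosen freely, for instance as zero. The nome hypothesis gives $\rho=tv(t)$ with $v(0)\neq0$.

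\cref{thm:strong-intro} then gives $\cC_g=[\h^{2g-2}]V+O(t)$, with $V$ as in \eqref{eq:vacuum-intro}. We compute $[\h^{2g-2}]V=\sum_{j\ge1}b_j[\h^{2g-2j}]\dd^{2j}\widehat R$, with the $b_1R_0''$ subtraction relevant only at $g=1$. The contribution of $A$ to this sum is holomorphic. The contribution of $\Phi$ is $\sum_{j+r=g,\,j\ge1}b_jb_r\dd^{2g}f_0=\big([z^{2g}]\cB(z)^2-b_g\big)\dd^{2g}f_0$, which is a universal constant times $t^{2-2g}$. The remaining step, which I expect to be the main obstacle, is the verification that this constant equals $u_g-(-1)^gr_g$. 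This amounts to an identity among Bernoulli numbers coming from the expansion of $\cB(z)^2=z^2/(4\sinh^2(z/2))$, using $\tfrac{d}{dz}\coth$-type generating functions. If the constants do not match as written, I would recheck the sign conventions $(-1)^r$ in $\widehat R$ and the period normalization of \cref{lem:period-scale}, since a rescaling of $t$ changes these constants.

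Once the identity is established, the $O(t)$ remainder shows that $\cC_g$ has no poles other than $c_gt^{2-2g}$. Each side of \eqref{eq:geom-intro} is meromorphic, so the formal statement upgrades to a statement about germs. Both directions of the equivalence then follow, and the induction closes.
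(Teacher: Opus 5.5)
Your proposal is correct and follows the paper's own argument. You truncate $\widehat R$ below $\h^{2g}$ and extend it by the coefficients of $\Phi$, which triangularity permits. Strong regularity then gives $\cC_g=[\h^{2g-2}]V+O(t)$, and the vacuum convolution supplies the polar part. Finally you induct genus by genus in one direction and apply the single-genus statement directly in the other.

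Both Bernoulli identities you left as expectations do hold. First, $-(2r-3)!\,b_r=(-1)^rr_r$ follows from $B_{2r}(1/2)=(2^{1-2r}-1)B_{2r}$. Second, the convolution follows from $\cB(z)^2=z^2e^z/(e^z-1)^2=\sum_{g\ge0}(1-2g)B_{2g}z^{2g}/(2g)!$, which gives $-(2g-3)!\sum_{j=1}^g b_jb_{g-j}=u_g-(-1)^gr_g$, exactly as in the paper's Bernoulli convolution lemma.
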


The proof is genuswise: no assumption on arbitrarily high genera is
needed for a fixed genus conclusion. More precisely, once the lower
relative gaps hold, the excess principal parts of $U_g$ and $R_g$ agree
up to the sign $(-1)^g$; see \cref{prop:polar-defect}.

There is also information beyond the polar part. Define
\[
 \bR_0=R_0-f_0,\quad \bR_1=R_1-\frac1{24}\log t,\quad
 \bR_g=R_g-r_gt^{2-2g},\quad \bU_g=U_g-u_gt^{2-2g}\quad(g\ge2).
\]
Then
\begin{equation}\label{eq:finite-intro}
 \bU_g(0)-(-1)^g\bR_g(0)
 =\sum_{j=1}^g b_j(-1)^{g-j}\bR_{g-j}^{(2j)}(0).
\end{equation}
A finite partition algorithm determines further Taylor coefficients. Once the
gap has been established, the known holomorphic anomaly equation determines
all relative potentials without additional positive degree input. We give
an explicit triangular ambiguity formula in \cref{prop:reconstruction}.
This last consequence uses, rather than reproves, the finite generation
and anomaly theorems of \cite{BFGW}.

\subsection{Higher genus structures and related work}
The mathematical distinction between structural equations and boundary
conditions is particularly visible in the quintic theory. The
mixed spin-$P$ construction of Chang-Li-Li-Liu \cite{CLLL} and the
$N$-mixed-spin-$P$ theory of Chang-Guo-Li-Li \cite{CGLN} supply moduli
spaces, virtual cycles and localization relations. Chang-Guo-Li proved
polynomial structure \cite{CGLPoly} and the BCOV Feynman rule
\cite{CGLBCOV}. A different geometric route is developed by Shuai Guo,
Felix Janda and Yongbin Ruan in \emph{Structure of Higher Genus
Gromov-Witten Invariants of Quintic 3-folds}
(\href{https://arxiv.org/abs/1812.11908v1}{arXiv:1812.11908v1}).
Using logarithmic gauged linear sigma model moduli and a comparison of
virtual cycles, that work proves finite generation, the holomorphic
anomaly equation and orbifold regularity; it distinguishes these results
from the conifold gap. The present argument addresses the latter type
of boundary problem for a different, relative theory.

These constructions belong to the wider geometry of phases of the
gauged linear sigma model. Fan-Jarvis-Ruan's quantum singularity theory
\cite{FJR} and their mathematical gauged linear sigma model
\cite{FJRGLSM} provide the Landau-Ginzburg side of this picture.
Chiodo-Ruan's quintic correspondence \cite{CR} identifies the genus zero
Landau-Ginzburg and Calabi-Yau theories through symplectic transformations.
The comparison below instead concerns the change to a conifold
polarization within the fixed local/relative correspondence.

The exact geometric input here is \cite[Theorem~1.2]{BFGW}, together with
its finite generation and anomaly statements
\cite[Theorems~1.6 and~1.8]{BFGW}. That work explicitly asks how the
ordinary and relative gaps are related through the correspondence
\cite[Section~1.6]{BFGW}. The genus zero local/relative comparison belongs
to the broader picture of \cite{vGGR}, and quantum Riemann-Roch
\cite{CG} underlies the higher genus correction. Shifted power sums,
elliptic partition traces and content products are classical
\cite{OO,BO,Okounkov,OP,HO}. Our additional step is the strong regularity
of the trace for the singular kernel \eqref{eq:universal-model}, including
arbitrary regular perturbations. The Bernoulli convolution and post gap
ambiguity reconstruction are consequences, not independent replacements
for that regularity. The modular descriptions \cite{CI,ASYYZ} fix the
polarization, while \cite{LP,OPi} give related anomaly structures.

For the broader refined theory, Andrea Brini and Yannik Schuler's
\emph{Refined Gromov-Witten invariants}
(\href{https://arxiv.org/abs/2410.00118v1}{arXiv:2410.00118v1}) proves
leading conifold asymptotics and explicitly separates them from the full
gap in Remark~6.12. Related sheaf interpretations are developed in
\cite{Bousseau,BFGW}. Huai-Liang Chang, Shuai Guo, Lei You and Huaigong
Zhang's 2026 preprint \emph{Conifold gap for the quintic threefold}
(\href{https://arxiv.org/abs/2609.13921v1}{arXiv:2609.13921v1}) uses
mixed spin-$P$ geometry and a cone vertex Laplace description to prove
gaps for the quintic and related theories.

The spectral-curve approach has a different chain of inputs: the
Eynard-Orantin recursion \cite{EO}, the Bouchard-Klemm-Mari\~no-Pasquetti
remodeling proposal \cite{BKMP}, and the proof of the toric orbifold
remodeling correspondence by Fang-Liu-Zong \cite{FLZ}. In their 2026
preprint \emph{Conifold Gap Theorem for Topological Recursion}
(\href{https://arxiv.org/abs/2608.11960v1}{arXiv:2608.11960v1}),
Bohan Fang, Juping Chen, Linji Chen and Zhengyu Zong prove a gap for
conifold-polarized topological-recursion free energies of local analytic
families of toric mirror curves with a generic single-node degeneration.
Their result treats separating and nonseparating nodes and gives a
remainder jointly holomorphic in the transverse and spectator parameters
at every fixed genus $g\geq2$. This is a spectral-curve theorem: a
corresponding boundary statement for a geometric Gromov-Witten potential
also requires a compatible mirror-map and polarization comparison.
It should not be read as an unrestricted conifold-gap theorem for all
Calabi-Yau threefolds. Neither this recursion theorem nor the mixed
spin-$P$ result is substituted for the elliptic relative correspondence
here. Our ordinary input remains Brini's local-plane theorem, and the
new step is the completed cycle transfer to the relative sector.

Murad Alim and Noah Tischler's \emph{From Closed to Relative Higher Genus
Gromov-Witten Invariants via Resurgent Functions}
(\href{https://arxiv.org/abs/2609.09256v1}{arXiv:2609.09256v1}) relates
strong coupling expansions for the resolved conifold to relative
invariants of $\PP^1$. In the present paper the boundary variable is
the local plane vanishing period, and the coupling remains formal.
Accordingly, the conclusions concern the stated relative, or
Nekrasov-Shatashvili, gap; they do not establish the full two-parameter
refined gap, an orientifold gap, or convergence of a nonperturbative
partition function.

\Cref{sec:geometry} identifies the conifold correction with an elliptic
cumulant trace, including its period, modular weight and descendant
normalizations. \Cref{sec:formal} proves strong regularity and the
triangular transfer, and deduces \cref{thm:main}.
\Cref{sec:jets} treats finite parts, regular Taylor coefficients,
ambiguity reconstruction and exact examples. All numerical examples are
separated from the proof of the gap.

\section{Conifold correspondence and elliptic cumulants}\label{sec:geometry}

\subsection{Relative potentials and the vanishing period}
Relative theories and gluing constructions also occur in the study of
changes of target geometry. Li-Ruan \cite{LR} developed symplectic
surgery methods for Calabi-Yau threefold invariants; Chen-Li-Li-Zhao
\cite{CLLZ} treated Ruan's conjecture for a specified class of singular
symplectic flops. In a distinct orbifold setting, Li-Chen-Du
\cite{LCD} derived a gluing recursion for a linear sigma model of a
$\PP^1$-orbifold by virtual localization. These are related instances of geometric
comparison, with targets different from the cubic boundary theory here.
The relative moduli and the precise higher genus identity used below
are those of \cite{Li01,Li02,BFGW}.

Let $H$ be the class of a line in $\PP^2$. Write
$\overline{\mathcal M}_{g}(\PP^2/E,d)$ for the moduli space of connected
relative stable maps in class $dH$, with a single relative marking of
contact order $3d$, in the relative stable map theory of
\cite{Li01,Li02}. We use the conventions of
\cite[Sections~1.1--1.2]{BFGW}, and set
\begin{equation}\label{eq:enumerative-definition}
 N_{g,d}^{\PP^2/E}
 =\int_{[\overline{\mathcal M}_{g}(\PP^2/E,d)]^{\rm vir}}
          (-1)^g\lambda_g,\qquad
 N_{g,d}^{K_{\PP^2}}
 =\int_{[\overline{\mathcal M}_{g}(K_{\PP^2},d)]^{\rm vir}}1.
\end{equation}
Here $\lambda_g$ is the top Chern class of the Hodge bundle. The relative
virtual dimension is $g$; the local virtual dimension is zero. In positive
degree, maps to the local threefold are supported on its zero section,
so the local invariants are defined by the usual local obstruction theory.

The generating functions, including their classical and unstable terms,
are
\begin{align}\label{eq:R-large}
 F_g^{\PP^2/E}(Q)
 &= -\delta_{g0}\frac{(\log Q)^3}{18}
    -\delta_{g1}\frac{\log Q}{24}
    +\sum_{d\ge1}\frac{(-1)^{3d+g-1}}{3d}
                   N_{g,d}^{\PP^2/E}Q^d,\notag\\
 F_g^{K_{\PP^2}}(Q)
 &= -\delta_{g0}\frac{(\log Q)^3}{18}
    -\delta_{g1}\frac{\log Q}{12}
    +\sum_{d\ge1}N_{g,d}^{K_{\PP^2}}Q^d.
\end{align}
No degree zero constant map terms are added in genus at least two.
At genus zero the two functions agree by the local/relative
correspondence. The mirror coordinate $q$ is related to $Q$ by
\begin{equation}\label{eq:mirror-map-definition}
 Q=q\exp\!\left(3\sum_{d\ge1}
       \frac{(3d-1)!}{(d!)^3}(-1)^dq^d\right),\qquad
 I_{11}=q\partial_q\log Q.
\end{equation}
We write $R_g=(F_g^{\PP^2/E})^c$ and $U_g=(F_g^{K_{\PP^2}})^c$,
where the superscript $c$ means the holomorphic limit polarization
specified below.

The analytic function $\phi$ in \eqref{eq:coord-intro} satisfies
\begin{equation}\label{eq:PF}
 \xi(1-\xi)\phi''+(1-2\xi)\phi'-\frac29\phi=0.
\end{equation}
The vanishing period has expansion
\begin{equation}\label{eq:T-expansion}
 T:=\sqrt3t=\xi+\frac{11}{18}\xi^2+\frac{109}{243}\xi^3
       +\frac{9389}{26244}\xi^4+O(\xi^5).
\end{equation}
In particular, it is an analytic coordinate. The coefficient of its linear
term fixes the scale relevant to the gap.

\begin{lemma}[Uniqueness of the normalized vanishing period]\label[lemma]{lem:period-scale}
The analytic solution of the local plane Picard-Fuchs equation which
vanishes at $\xi=0$ and has derivative $1$ there is $T=\sqrt3t$.
Consequently Brini's normalized vanishing period $t_{\rm CF}$ is exactly
$\sqrt3t$, not merely equal to it to first order.
\end{lemma}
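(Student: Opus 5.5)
The plan is to reduce the lemma to the local structure of the Picard--Fuchs equation at the conifold point $\xi=0$, and then to a rigidity statement about the normalization. The local plane Picard--Fuchs operator, written in $\xi$, is third order. Its solutions are the constant together with the primitives $\int\phi(s)/(1-s)\,ds$ and $\int\psi(s)/(1-s)\,ds$, where $\phi,\psi$ span the solutions of the hypergeometric equation \eqref{eq:PF}. So the first step is to write the period operator as $\frac{d}{d\xi}\,(1-\xi)\,L_{\rm HG}^{-1}$-type factorization. Concretely, $\theta$-derivatives of periods, multiplied by $(1-\xi)$ in the $\xi$-frame, satisfy \eqref{eq:PF}.

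Next I analyse \eqref{eq:PF} at $\xi=0$. It has indicial equation $r^2=0$, so there is exactly one analytic solution up to scale, namely $\phi={}_2F_1(\tfrac13,\tfrac23;1;\xi)$. The other solution carries a $\log\xi$ term. Hence every period analytic at $\xi=0$ has the form
\[
c_0+c_1\int_0^\xi\frac{\phi(s)}{1-s}\,ds .
\]
The condition that the period vanish at $\xi=0$ forces $c_0=0$. The condition that its derivative at $\xi=0$ equal $1$ forces $c_1=1$, since $\phi(0)=1$. Thus the solution is $\int_0^\xi\phi/(1-s)\,ds=\sqrt3\,t=T$, and it is unique. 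As a consistency check, expanding the integrand $\phi(s)(1-s)^{-1}=1+(2/9+1)s+\cdots$ reproduces \eqref{eq:T-expansion}: the coefficient of $\xi^2$ is $\tfrac12\cdot\tfrac{11}{9}=\tfrac{11}{18}$.

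Finally I compare with Brini's $t_{\rm CF}$. His normalized vanishing period is, by definition, a solution of the same Picard--Fuchs equation, analytic at the conifold and vanishing there, with a stated linear normalization. The main obstacle is translating his coordinate and normalization conventions into $\xi$. That means matching his conifold variable with $\xi=1+27q$ up to an analytic change that is the identity to first order, and checking that his scale convention amounts to unit derivative in $\xi$. I would first try to verify this from the leading coefficients of his stated expansion. Once the conventions match, the uniqueness just established shows that agreement to first order forces exact equality $t_{\rm CF}=\sqrt3t$, because the difference of two such solutions is again an analytic vanishing period, hence a multiple of $T$, and that multiple is zero by comparing the linear terms.
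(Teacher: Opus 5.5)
Your proposal is correct and is essentially the paper's proof: the paper checks by direct differentiation that $\mathcal D Y=(\xi-1)\bigl(\xi(1-\xi)\psi''+(1-2\xi)\psi'-\tfrac29\psi\bigr)$ for $\psi=(1-\xi)\partial_\xi Y=-\theta Y$. It then uses the recurrence $(n+1)^2a_{n+1}=(n+\tfrac13)(n+\tfrac23)a_n$, the concrete form of your double-root Frobenius argument, to see that analytic solutions are multiples of $\phi$, integrates with $Y(0)=0$, and simply asserts that Brini's $t_{\rm CF}$ obeys the same equation and normalization, which is the convention check you flag.

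One wording slip: the function satisfying \eqref{eq:PF} is $(1-\xi)\partial_\xi Y$ itself, not $(1-\xi)$ times $\theta Y$. So the factorization reads $\mathcal D=(\xi-1)\,L_{\rm HG}\circ(1-\xi)\partial_\xi$, with no inverse.
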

\begin{proof}
Write the local plane period operator as
\[
 \mathcal D=\theta^3+3q\theta(3\theta+1)(3\theta+2),
 \qquad \theta=(\xi-1)\partial_\xi.
\]
For $\psi=(1-\xi)\partial_\xi Y$, direct differentiation gives
\[
 \mathcal D Y=(\xi-1)
 \left(\xi(1-\xi)\psi''+(1-2\xi)\psi'-\frac29\psi\right).
\]
Since $\xi-1$ is a unit at the conifold, an analytic period has
$\psi$ satisfying \eqref{eq:PF}. The coefficient
recurrence for an analytic solution $\psi=\sum_{n\ge0}a_n\xi^n$ is
\[
 (n+1)^2a_{n+1}=(n+\tfrac13)(n+\tfrac23)a_n,\qquad n\ge0.
\]
Thus $\psi(0)=1$ determines $\psi=\phi$ to all orders. Integration with
$Y(0)=0$ gives precisely $T$ in \eqref{eq:T-expansion}.
Brini's $t_{\rm CF}$ has the same defining Picard-Fuchs equation and
normalization, so uniqueness proves the equality.
\end{proof}

Set
\[
 X=(1+27q)^{-1},\qquad
 S=\theta\log I_{11}-\frac{X-1}{3},\qquad \theta=q\partial_q.
\]
The differential ring variables of \cite{BFGW} satisfy
\begin{equation}\label{eq:ring}
 \theta X=X(X-1),\qquad
 \theta S=-S^2+\frac{X-1}{3}S-\frac{X(X-1)}9,
 \qquad \theta=q\partial_q.
\end{equation}
Their conifold evaluation is
\begin{equation}\label{eq:evaluation}
 I_{11}^c=\phi,\qquad X^c=\xi^{-1},\qquad
 S^c=(\xi-1)\frac{\phi'}\phi-\frac{\xi^{-1}-1}{3}.
\end{equation}
We write $X,S$ for the evaluated germs when no ambiguity is possible. Substitution of \eqref{eq:PF} proves \eqref{eq:ring} for
these germs. The remaining generator relation is
\begin{equation}\label{eq:I11-derivative}
 \theta I_{11}=I_{11}\left(S+\frac{X-1}{3}\right).
\end{equation}
It evaluates to $\theta\phi=(\xi-1)\phi'$, by
\eqref{eq:evaluation}. Thus the evaluation is compatible with all three
generator relations, not merely with a finite number of expansions. Since
$\theta=(\xi-1)\partial_\xi$, the operator
$D=3I_{11}^{-1}\theta$ becomes
\begin{equation}\label{eq:D-scale}
 D^c=3\phi^{-1}(\xi-1)\partial_\xi=-\sqrt3\,\dd.
\end{equation}
This holds for all iterated derivatives of the evaluated expressions.

\subsection{The normalized cusp action}
We spell out the modular normalization because its signs will cancel the
signs in the geometric correspondence. Use the generators
\begin{align}\label{eq:modgens}
 \mA(\tau)^3&=\frac{\eta(\tau)^9}{\eta(3\tau)^3}
               +27\frac{\eta(3\tau)^9}{\eta(\tau)^3},\notag\\
 \mB(\tau)&=\frac{E_2(\tau)+3E_2(3\tau)}4,\qquad
 \mC(\tau)=\frac{\eta(\tau)^9}{\eta(3\tau)^3}.
\end{align}
Their weights are $1,2,3$, respectively, and
\begin{equation}\label{eq:ring-mod}
 I_{11}=\mA,\qquad X=\frac{\mA^3}{\mC},\qquad
 S=\frac{\mA\mB-\mA^3}{6\mC}.
\end{equation}
These are the generators in \cite[Section~4]{BFGW}; for modular changes of
polarization see also \cite{ABK,ASYYZ,CI}.

Write $z=-1/(3\tau)$ and $\rho=e^{2\pi iz}$. In making the cusp change, first
replace $E_2$ by its almost holomorphic completion, apply the weight
transformation, and then take the holomorphic part at the $z$-cusp. This
prescription is essential for $\mB$. If $\mu=-i\sqrt3z$, the eta
transformation and the weight two transformation of the completion give
\begin{align}\label{eq:Fricke-raw}
 \mA\left(-\frac1{3z}\right)&=\mu\,\mA(z),\notag\\
 \widehat{\mB}\left(-\frac1{3z}\right)&=-\mu^2\widehat{\mB}(z),\notag\\
 \mC\left(-\frac1{3z}\right)&=
       \mu^3\bigl(\mA(z)^3-\mC(z)\bigr).
\end{align}
The root for $\mA$ is fixed by continuation of the normalized period.
After division by the weight factor, the normalized cusp action is
\begin{equation}\label{eq:Fricke}
 \mA^c=\mA(\rho),\qquad \mB^c=-\mB(\rho),\qquad
 \mC^c=\mA(\rho)^3-\mC(\rho).
\end{equation}
The last term has a simple zero at $\rho=0$; it is not to be replaced by
the large radius series $\mC(\rho)$ itself. The potentials at genus at least
two have weight zero, so the common weight trivialization cancels.

The relation between the nome and $\xi$ is equivalently specified by
\begin{equation}\label{eq:nome}
 \frac{d\log\rho}{d\xi}=\frac1{\xi(1-\xi)\phi(\xi)^2},\qquad
 \rho=\frac{\xi}{27}+O(\xi^2).
\end{equation}
Then $\mA(\rho)=\phi$ and $\mC^c=\xi\phi^3$. Together with
\eqref{eq:ring-mod}, this recovers \eqref{eq:evaluation}.
In particular,
\begin{equation}\label{eq:nome-linear}
 \rho=\rho_1t+O(t^2),\qquad \rho_1=\frac{\sqrt3}{27}.
\end{equation}

\begin{remark}[Holomorphic cusp representatives]\label[remark]{rem:cusp-meaning}
The superscript $c$ denotes the holomorphic part at the conifold cusp of
the same almost holomorphic modular function, as in
\cite[Section~1.6]{BFGW}. Equations
\eqref{eq:Fricke-raw}--\eqref{eq:Fricke} express this operation in the
large radius generators. In particular, it is not evaluation of the
unchanged Fourier series $\mA,\mB,\mC$ at $\rho$: that would leave the
denominator nonzero at $\rho=0$. The transformed denominator
$\mC^c=\xi\phi^3$ vanishes simply. This convention also agrees with the
conifold opposite description in \cite[Sections~10.3 and~10.8]{CI}.
\end{remark}

\begin{lemma}[Elliptic weight transformation]\label[lemma]{lem:elliptic-weight}
For $k=1,2,3$, the normalized conifold evaluation satisfies
\begin{equation}\label{eq:E-scale}
 \bigl(E_{2k}(3\tau)\bigr)^c=\left(-\frac13\right)^kE_{2k}(\rho).
\end{equation}
Consequently, a homogeneous elliptic quasimodular form of weight $2w$
acquires the factor $(-1/3)^w$.
\end{lemma}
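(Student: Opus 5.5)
The plan is to derive \eqref{eq:E-scale} from the modular transformation of $E_{2k}$ under the Fricke-type change $\tau=-1/(3z)$, using the same normalization as in \eqref{eq:Fricke-raw}--\eqref{eq:Fricke}. The key observation is that $3\tau=-1/z$. Hence $E_{2k}(3\tau)=E_{2k}(-1/z)$, and the full modular group acts here, not merely $\Gamma_0(3)$.

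For $k=2,3$ I will use the standard relation $E_{2k}(-1/z)=z^{2k}E_{2k}(z)$. For $k=1$ I first pass to the almost holomorphic completion $\widehat E_2$, which transforms with the same factor $z^2$, as prescribed before \eqref{eq:Fricke-raw}.

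Next I divide by the weight factor used in \eqref{eq:Fricke}. The weight-one generator $\mA$ carries $\mu=-i\sqrt3z$, so a form of weight $2k$ is normalized by $\mu^{2k}=(-3z^2)^k$. This gives
\[
 \bigl(E_{2k}(3\tau)\bigr)^c=\frac{z^{2k}}{(-3z^2)^k}E_{2k}(z)=\left(-\frac13\right)^kE_{2k}(z).
\]
For $k=1$ I then take the holomorphic part at the $z$-cusp, which replaces $\widehat E_2(z)$ by $E_2(z)$. Finally I rename the variable to match the convention $\rho=e^{2\pi iz}$, under which $E_{2k}$ is written as a $q$-series in $\rho$.

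The step I expect to need the most care is checking that this normalization agrees with the one defined through $\mA,\mB,\mC$, including the sign of $\mu^2$. A consistency check is available. We have $\mB=(E_2(\tau)+3E_2(3\tau))/4$, and the transformation of $E_2(\tau)=E_2(-1/(3z))$ follows by the same argument with $3z$ in place of $z$, giving the factor $(3z)^2/\mu^2=-3$ times $E_2(3z)$. Hence
\[
 \mB^c=\tfrac14\bigl(-3E_2(3z)-E_2(z)\bigr)=-\mB(z),
\]
which matches \eqref{eq:Fricke}. This confirms both the sign and the root convention.

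The consequence for a homogeneous elliptic quasimodular form of weight $2w$ then follows. Such a form is a polynomial in $E_2,E_4,E_6$ evaluated at $3\tau$, and each monomial has total weight $2w$. The map $c$ is a ring homomorphism on holomorphic parts: taking holomorphic parts commutes with products because the completion of $E_2$ enters polynomially and the constant-term projection in $1/\operatorname{Im}$ is multiplicative. Each monomial therefore acquires $\prod_j(-1/3)^{k_j}=(-1/3)^w$.
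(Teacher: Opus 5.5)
Your proof is correct, but it takes a different route from the paper's. You use $3\tau=-1/z$ and apply the $\mathrm{SL}_2(\mathbb Z)$ inversion law for $E_{2k}$ directly, completing $E_2$ first. You then normalize by $\mu^{2k}=(-3z^2)^k$, and your computation of $\mB^c$ correctly checks the sign of $\mu^2$ against \eqref{eq:Fricke}.

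The paper does not transform $E_{2k}(3\tau)$ directly. It writes $E_2,E_4,E_6$ at $3\tau$ in the generators via the embedding \eqref{eq:E-embed} from \cite[Proposition~4.8]{BFGW} and applies the generator-level action \eqref{eq:Fricke}. It then recognizes the outcome through the same embedding in $z$ together with $3E_2(3z)-E_2(z)=2\mA(z)^2$. As an alternative, it shows that explicit expressions $e_{2k}$ in $\phi$ and $\xi$ solve Ramanujan's system in $\delta=\rho\partial_\rho$ with matching initial terms, so $e_{2k}=E_{2k}(\rho)$.

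Your route is shorter and makes the uniform factor transparent. It even gives the weight $2w$ consequence at once: any completed form satisfies $\widehat f(-1/z)=z^{2w}\widehat f(z)$, so the ring-homomorphism step is not needed. The paper's route instead works with \eqref{eq:Fricke}, which is the form in which $c$ is actually applied to $R_g$ and $U_g$. Its ODE version also identifies $E_{2k}(\rho)$ with closed functions of $\xi$, linking $\rho$ to the geometric coordinate through \eqref{eq:nome}. Your argument proves the lemma as stated but does not supply that identification.
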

\begin{proof}
The embedding of elliptic quasimodular forms in the local plane ring is
\cite[Proposition~4.8]{BFGW}
\begin{align}\label{eq:E-embed}
 3E_2(3\tau)&=2\mB+\mA^2,\notag\\
 9E_4(3\tau)&=\mA^4+8\mA\mC,\notag\\
 27E_6(3\tau)&=-\mA^6+20\mA^3\mC+8\mC^2.
\end{align}
The same formulas in the $z$ variable, together with
$3E_2(3z)-E_2(z)=2\mA(z)^2$, imply the assertion using
\eqref{eq:Fricke}. Alternatively, define
\begin{align}\label{eq:eis-PF}
 e_2&=\phi^2(1-4\xi)+12\xi(1-\xi)\phi\phi',\notag\\
 e_4&=\phi^4(1+8\xi),\qquad
 e_6=\phi^6(1-20\xi-8\xi^2).
\end{align}
A substitution of \eqref{eq:PF}, with
$\delta=\xi(1-\xi)\phi^2\partial_\xi=\rho\partial_\rho$, gives
\[
 \delta e_2=\frac{e_2^2-e_4}{12},\qquad
 \delta e_4=\frac{e_2e_4-e_6}{3},\qquad
 \delta e_6=\frac{e_2e_6-e_4^2}{2}.
\]
The normalization \eqref{eq:nome} gives the first terms
$e_2=1-24\rho+O(\rho^2)$, $e_4=1+240\rho+O(\rho^2)$ and
$e_6=1-504\rho+O(\rho^2)$. These determine the entire solution. Indeed,
the linearization of the displayed system at $(1,1,1)$ is
\[
 J=\begin{pmatrix}
  1/6&-1/12&0\\1/3&1/3&-1/3\\1/2&-1&1/2
 \end{pmatrix},\qquad \det(nI-J)=n^2(n-1).
\]
After the coefficient of $\rho$ is fixed, the coefficient equations are
invertible for every $n\ge2$. Hence $e_{2k}=E_{2k}(\rho)$ to all orders.
Substitution in \eqref{eq:E-embed} gives \eqref{eq:E-scale}. Since
$E_2,E_4,E_6$ generate the elliptic quasimodular ring, homogeneity proves
the last statement.
\end{proof}

\begin{remark}
Equation \eqref{eq:E-scale} includes the holomorphic limit change for
$E_2$. A naive argument substitution in an uncompleted quasimodular form
would leave an inhomogeneous term and would not justify the lemma.
The differential ring calculation above supplies an independent way to
verify the required convention.
\end{remark}

\subsection{Low genus and meromorphicity}
The genus zero identity and genus one formula in
\cite[Lemma~4.9 and Theorem~1.4]{BFGW} imply
\begin{equation}\label{eq:R0-R1}
 R_0'''=\frac{\sqrt3}{\xi\phi^3},\qquad
 R_1=\frac1{24}\log\frac{\xi}{1-\xi}+\text{constant}.
\end{equation}
The first equality follows from $D^3F_0=-9X/I_{11}^3$ and
\eqref{eq:D-scale}. Since the difference of its right hand side and $1/t$
is holomorphic, integration gives
\begin{equation}\label{eq:low}
 R_0=f_0+\bR_0,\qquad R_1=\frac1{24}\log t+\bR_1,
 \qquad \bR_0,\bR_1\in\CC\{t\}.
\end{equation}
The quadratic ambiguity in $R_0$ is immaterial: only third and higher
derivatives of $R_0$ occur in the correction.
For an independent check on the ordinary polarization, evaluate the known
local genus one expression as
\[
 U_1=-\frac1{12}\log\bigl(\xi(1-\xi)\bigr)
       -\frac12\log\phi+\text{constant}.
\]
Its derivative in $T=\sqrt3t$ is
\begin{equation}\label{eq:ordinary-one-check}
 \frac{dU_1}{dT}=-\frac1{12T}+\frac5{216}-\frac{T}{11664}+O(T^2).
\end{equation}
These terms agree with the conifold opposite computation in
\cite[Section~10.8]{CI}. This is a normalization check, not a substitute
for the all order holomorphic limit identification above.

Finite generation \cite[Theorem~1.6]{BFGW} gives
\[
 F_g^{\PP^2/E}\in\mC^{-(2g-2)}\QQ[\mA,\mB,\mC]_{6g-6},\qquad g\ge2.
\]
Since $\mC^c$ has a simple zero and $t$ is analytic, $R_g$ is a meromorphic
germ with pole order at most $2g-2$. The ordinary potentials have the
corresponding fixed genus meromorphic description
\cite{CI,LP}. This statement is sufficient to turn the formal cancellation
of the principal part into holomorphicity of the fixed genus remainder.

\subsection{The elliptic variable and the local/relative correction}
At large radius the elliptic degree variable in the comparison is
\begin{equation}\label{eq:elliptic-degree-variable}
 \widetilde{\mathcal Q}
 =-\exp\bigl(-D^2F_0^{K_{\PP^2}}\bigr)
 =e^{2\pi i(3\tau)},\qquad D=3Q\partial_Q.
\end{equation}
The first equality is \cite[Equation~(7)]{BFGW}; the second follows from
\cite[Lemma~4.9 and Section~4.6]{BFGW}. Thus the stationary elliptic
potentials are quasimodular forms in $3\tau$ before the cusp change.
They are transformed using \cref{lem:elliptic-weight}. In particular, one
must transform their modular representatives, rather than substitute a
conifold genus zero primitive into the exponential in
\eqref{eq:elliptic-degree-variable}. The proof will only require that
$\rho$ has the simple zero \eqref{eq:nome-linear}.

Let $F^E_{\ell,\mathbf a}(\rho)$ denote the connected stationary elliptic
potential with insertions $\tau_{a_j}(\omega)$, where $\omega$ is the point
class. Its dimension constraint is
$\sum_ja_j=2\ell-2$, and its quasimodular weight is
$\sum_j(a_j+2)=2\ell-2+2n$. Its disconnected convention excludes unmarked
components; connected marked degree zero terms are retained. These are
the conventions used in \cite[Section~3.4]{BFGW}.

\begin{proposition}[Conifold local/relative identity]\label[proposition]{prop:local-relative}
For every $g\ge2$ one has $U_g=(-1)^gR_g+\cC_g$, where
\begin{equation}\label{eq:graphsum}
 \cC_g=\sum_{n\ge1}\frac1{n!}
 \sum_{\substack{g=\ell+\sum_jr_j,\ \sum_ja_j=2\ell-2\\
                  a_j,r_j\ge0,\ (a_j,r_j)\ne(0,0)}}
 F^E_{\ell,\mathbf a}(\rho)
 \prod_{j=1}^n(-1)^{r_j}\dd^{a_j+2}R_{r_j}.
\end{equation}
The inner sum is over ordered decorations.
\end{proposition}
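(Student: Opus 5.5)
The plan is to start from the large radius higher genus local/relative correspondence \cite[Theorem~1.2]{BFGW}. It expresses $F_g^{K_{\PP^2}}$ as $(-1)^gF_g^{\PP^2/E}$ plus a sum over graphs. Each graph has one elliptic vertex of genus $\ell$, namely a connected stationary potential $F^E_{\ell,\mathbf a}$ of $E$ in the elliptic degree variable $\widetilde{\mathcal Q}$. It carries $n$ legs, and leg $j$ is decorated by $D^{a_j+2}F_{r_j}^{\PP^2/E}$ with a sign $(-1)^{r_j}$ (or the sign $(-1)^{g}$ distributed as in \cite{BFGW}), together with the symmetry factor $1/n!$ for ordered decorations. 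I will first rewrite this identity so that every ingredient is an element of the quasimodular ring generated by $\mA,\mB,\mC$. This uses \eqref{eq:elliptic-degree-variable} to identify $\widetilde{\mathcal Q}=e^{2\pi i(3\tau)}$, so that $F^E_{\ell,\mathbf a}$ becomes a quasimodular form of weight $2\ell-2+2n$ in $3\tau$. The potentials $F_r^{\PP^2/E}$ lie in the finitely generated ring by \cite[Theorem~1.6]{BFGW}. The genus condition $g=\ell+\sum_jr_j$ and the constraint $\sum a_j=2\ell-2$ are read off from the virtual dimension count. The exclusion $(a_j,r_j)\ne(0,0)$ reflects that genus zero legs with $a=0$ are absorbed into the elliptic degree variable, that is, into $\widetilde{\mathcal Q}$ through $D^2F_0$.

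Next I will apply the conifold holomorphic limit $c$ termwise. The key point is that the whole identity is an identity of almost holomorphic modular objects of weight zero. The operation ``complete $E_2$, apply the Fricke/cusp transformation, take the holomorphic part at the $z$-cusp'' is therefore a ring homomorphism that commutes with the derivation $D$ on completed objects, up to the holomorphic-limit convention. It consequently preserves the identity. For the factors I will use the following inputs. \Cref{lem:elliptic-weight} gives $(F^E_{\ell,\mathbf a})^c=(-1/3)^{\ell-1+n}F^E_{\ell,\mathbf a}(\rho)$. \Cref{eq:D-scale} gives $(D^{a+2}F_r)^c=(-\sqrt3)^{a+2}\dd^{a+2}R_r$. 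The weight trivialization cancels because $U_g,R_g$ have weight zero.

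The remaining step is the sign and scale bookkeeping. The total factor per graph is
\[
 (-1/3)^{\ell-1+n}\prod_j(-\sqrt3)^{a_j+2}=(-1)^{\ell-1+n}3^{-\ell+1-n}(-1)^{\sum a_j}3^{(\sum a_j)/2+n}.
\]
With $\sum a_j=2\ell-2$ this equals $(-1)^{\ell-1+n}$. The residual sign must combine with the signs in \cite[Theorem~1.2]{BFGW}, which I expect to be $(-1)^{n+\ell-1}$ times $\prod(-1)^{r_j}$ or an equivalent arrangement, to produce exactly \eqref{eq:graphsum}. The relative sign $(-1)^g$ on $R_g$ survives from the large radius identity, consistent with the $(-1)^g\lambda_g$ and $(-1)^{3d+g-1}$ normalizations in \eqref{eq:R-large}.

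The main obstacle is justifying that the holomorphic limit commutes with the derivatives and products in the graph sum. The subtlety is that $\dd^{a+2}R_r$ must be computed from the completed (non-holomorphic) derivative and then restricted. I would handle this using \eqref{eq:ring}--\eqref{eq:I11-derivative}. These show that the evaluation \eqref{eq:evaluation} is a differential-ring homomorphism sending $D$ to $-\sqrt3\dd$, so it applies to all iterated derivatives uniformly. I would also verify the precise signs of \cite{BFGW} against the low genus case $g=2$ as a check.
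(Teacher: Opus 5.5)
Your proposal is correct and follows essentially the same route as the paper: start from Theorem~1.2 of \cite{BFGW}, transform the elliptic factor by \cref{lem:elliptic-weight} and each leg by \eqref{eq:D-scale}, and observe that the powers of $3$ cancel, leaving the residual sign $(-1)^{\ell-1+n}$. The sign you anticipate from \cite{BFGW} is exactly the one the paper uses, namely $(-1)^{\ell-1}$ on the elliptic factor and $(-1)^{r_j-1}$ on each relative factor, whose product is $(-1)^{\ell-1+n}\prod_j(-1)^{r_j}$; beyond this, the paper only converts the $1/|\Aut|$ unordered sum into the $1/n!$ ordered one and notes that $n=0$ forces $\ell=g=1$, so it contributes nothing for $g\ge2$.
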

\begin{proof}
Start with \cite[Theorem~1.2]{BFGW}. Before conifold evaluation, the elliptic
factor has sign $(-1)^{\ell-1}$ and each relative factor has sign
$(-1)^{r_j-1}$. By \cref{lem:elliptic-weight}, its weight gives an additional
$(-1/3)^{\ell-1+n}$. The derivative factors in \eqref{eq:D-scale} give
\[
 (-\sqrt3)^{\sum_j(a_j+2)}=3^{\ell-1+n},
\]
since the exponent is even. The powers of three cancel and the total sign is
\[
 (-1)^{\ell-1+\sum_jr_j-n+\ell-1+n}=(-1)^{\sum_jr_j}.
\]
For a multiset of decorations with multiplicities $m_{a,r}$, replacing the
unordered sum by the ordered sum contributes
\[
 \frac1{n!}\frac{n!}{\prod_{a,r}m_{a,r}!}
 =\frac1{\prod_{a,r}m_{a,r}!}=\frac1{|\Aut|}.
\]
Finally, $n=0$ forces $\ell=g=1$ by the dimension constraint, and hence
contributes nothing for $g\ge2$. This proves \eqref{eq:graphsum}.
\end{proof}

\subsection{Completed cycles and connected traces}
The shift constants in \eqref{eq:shifted-intro} are part of the completed
power sums. In particular,
\begin{equation}\label{eq:vac-constants}
 p_1(\lambda)=|\lambda|-\frac1{24},\qquad
 p_{2j}(\varnothing)=0,\qquad
 \frac{p_{2j-1}(\varnothing)}{(2j-1)!}=b_j.
\end{equation}
The last identity follows from $\zeta(1-2j)=-B_{2j}/(2j)$ and
$B_{2j}(1/2)=(2^{1-2j}-1)B_{2j}$.

\begin{lemma}[Descendant normalization and cumulants]\label[lemma]{lem:OP}
The elliptic connected stationary potential is the joint cumulant, under
\eqref{eq:bracket-intro}, of the observables
$p_{a_j+1}/(a_j+1)!$. Equivalently,
\begin{equation}\label{eq:elliptic-log}
 \log\bracket{\exp\left(\sum_{a\ge0}
                  y_a\frac{p_{a+1}}{(a+1)!}\right)}
 =\sum_{n\ge1}\frac1{n!}\sum_{a_1,\ldots,a_n\ge0}
   F^E_{\ell,\mathbf a}(\rho)\prod_{j=1}^ny_{a_j},
\end{equation}
where the summand is zero unless $\sum a_j=2\ell-2$ for an integer
$\ell\ge1$.
\end{lemma}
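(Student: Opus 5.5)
The plan is to reduce the lemma to the Bloch--Okounkov/Okounkov--Pandharipande description of the stationary theory of an elliptic curve. I will then check that the normalizations in \eqref{eq:shifted-intro} and \eqref{eq:bracket-intro} match those of \cite{OP}. The GW/Hurwitz correspondence of \cite{OP} states that disconnected stationary invariants of $E$ with insertions $\tau_{a_j}(\omega)$ equal $\sum_\lambda \rho^{|\lambda|}\prod_j \mathbf p_{a_j+1}(\lambda)/(a_j+1)!$, divided by the unmarked degree zero contribution. Here $\mathbf p_k$ is the completed power sum with constant term $(1-2^{-k})\zeta(-k)$ and $\rho$ is the degree variable. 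This matches \eqref{eq:shifted-intro} exactly, including the constants recorded in \eqref{eq:vac-constants}.

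The first step is normalization. Dividing by $P(\rho)=\sum_\lambda\rho^{|\lambda|}$ removes the components carrying no marking. Up to the usual $\rho^{-1/24}$ factor, which is absorbed into the constant $-1/24$ of $p_1$, $P(\rho)$ is the generating function of unmarked covers, i.e.\ $\exp$ of the unmarked connected potential. This gives exactly the disconnected convention "excluding unmarked components" of \cite[Section~3.4]{BFGW}. Consequently $\bracket{\prod_j p_{a_j+1}/(a_j+1)!}$ is the disconnected potential with no unmarked components, and connected marked degree zero terms are retained automatically, since the constants of $p_k$ encode them.

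The second step passes from disconnected to connected potentials. The disconnected potential with marked components equals the sum over set partitions of the marked points of products of connected potentials. In generating-function form this is the exponential formula: the exponential generating function in the formal variables $y_a$ of disconnected invariants is the exponential of that of connected ones. Taking $\log$ of $\bracket{\exp(\sum_a y_a p_{a+1}/(a+1)!)}$ therefore yields the connected generating series. Expanding in monomials $\prod_j y_{a_j}$ over ordered tuples with the factor $1/n!$ gives \eqref{eq:elliptic-log}, and this is precisely the joint cumulant statement. The genus is read off from the dimension constraint $\sum a_j=2\ell-2$ on a connected target of genus one. Hence the summand vanishes unless $\sum a_j$ is even, and then $\ell=1+\sum a_j/2\ge1$. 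Parity vanishing of the cumulant for odd total degree follows from the symmetry $\lambda\mapsto\lambda'$, which sends $p_k\mapsto(-1)^kp_k$. The $\rho$-weight statement is the quasimodularity of \cite{BO}, and it is not needed here.

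The main obstacle is bookkeeping of conventions rather than mathematics. Three points must agree with \cite{BFGW}: the sign and constant normalization of the completed cycles (confirmed via \eqref{eq:vac-constants}), the identification of $\rho$ with the elliptic degree variable, and the treatment of connected degree zero contributions, such as $\langle\tau_0(\omega)\rangle_{1,0}$ arising from the constant $-1/24$ in $p_1$. I would check the last point directly on a low example: the cumulant of $p_1$ gives $\rho\partial_\rho\log P-1/24=-E_2(\rho)/24$, which is the known genus one one-point function. With these checks, the lemma is a restatement of \cite[Section~3.4]{BFGW} and \cite{OP} in the present notation.
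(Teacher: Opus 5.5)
Your proposal takes essentially the same route as the paper. It uses the Okounkov--Pandharipande correspondence $\tau_a(\omega)=\overline{(a+1)}/a!$ with eigenvalue $p_{a+1}/(a+1)$ and no dimension factor for a genus one target, divides by $P(\rho)$ to remove unmarked components, takes the logarithm (exponential formula) to pass to connected invariants, and applies the dimension constraint, with the completed-cycle constants supplying the marked degree zero terms. There is one slip: conjugation acts by $p_k(\lambda')=(-1)^{k-1}p_k(\lambda)$, not by $(-1)^kp_k$, since with your sign $\langle p_1\rangle=-E_2/24$ would be forced to vanish; the parity claim is in any case already supplied by the dimension constraint.
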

\begin{proof}
The completed cycle correspondence of \cite[Equation~(0.23)]{OP} is
$\tau_a(\omega)=\overline{(a+1)}/a!$. The eigenvalue of
$\overline{(a+1)}$ is $p_{a+1}/(a+1)$, giving precisely the factorial
$(a+1)!$ in \eqref{eq:elliptic-log}. In the character formula the
representation dimension factor is raised to the Euler characteristic of
the target. It is therefore absent for an elliptic target. The disconnected
trace is a sum over partitions with weight $\rho^{|\lambda|}$, as in
\cite[Section~5]{OP}. Dividing by $P(\rho)$ removes the unmarked partition
factor; taking the logarithm extracts connected contributions.

The dimension constraint for a connected genus $\ell$ stationary invariant
is $\sum a_j=2\ell-2$. The marked degree zero one-point terms survive and
are exactly the constants in \eqref{eq:vac-constants}. Connected marked
degree zero terms with more than one marking vanish. Thus normalization by
$P(\rho)$ does not delete the required vacuum constants, and the connected
formula is \eqref{eq:elliptic-log}.
\end{proof}

\begin{proposition}[Exact geometric cumulant identity]\label[proposition]{prop:geometric-transfer}
With the times in \eqref{eq:times-intro},
\begin{equation}\label{eq:exact-C}
 \sum_{g\ge2}\h^{2g-2}\cC_g
 =\log\bracket{\exp\left(\sum_{a\ge0}
                   \frac{x_ap_{a+1}}{(a+1)!}\right)}.
\end{equation}
For fixed $g$, its coefficient depends only on $R_0,\ldots,R_{g-1}$.
\end{proposition}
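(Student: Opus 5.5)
The plan is to substitute the times $x_a$ of \eqref{eq:times-intro} into the cumulant expansion of \cref{lem:OP} with $y_a=x_a$, and then compare coefficients of $\h^{2g-2}$ with the graph sum \eqref{eq:graphsum} of \cref{prop:local-relative}. First expand each time as
\[
 x_a=\sum_{r\ge0}(-1)^r\h^{a+2r}\dd^{a+2}R_r-\delta_{a0}R_0''
 =\sum_{\substack{r\ge0\\(a,r)\ne(0,0)}}(-1)^r\h^{a+2r}\dd^{a+2}R_r .
\]
This uses $\widehat R=\sum_r(-1)^r\h^{2r}R_r$. The subtraction in $x_0$ removes exactly the term $(a,r)=(0,0)$, which is the undecorated genus zero term excluded in \eqref{eq:graphsum}. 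Consequently every insertion carries a strictly positive power of $\h$.

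Next I insert this into the right side of \eqref{eq:elliptic-log}. Multilinearity in the $y_{a_j}$ turns the $n$-th term into a sum over ordered decorations $(a_j,r_j)\ne(0,0)$ with weight
\[
 \frac1{n!}F^E_{\ell,\mathbf a}(\rho)\prod_j(-1)^{r_j}\dd^{a_j+2}R_{r_j}\,\h^{\sum_j(a_j+2r_j)} .
\]
The dimension constraint $\sum_ja_j=2\ell-2$ gives $\sum_j(a_j+2r_j)=2\ell-2+2\sum_jr_j=2g-2$ with $g=\ell+\sum_jr_j$. So the $\h^{2g-2}$ coefficient is precisely $\cC_g$ of \eqref{eq:graphsum}, ordered sum and $1/n!$ included.

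It then remains to see that only $g\ge2$ occurs. Since $\ell\ge1$, we have $g\ge1$. For $g=1$ we need $\ell=1$ and all $r_j=0$, hence all $a_j=0$, which is forbidden unless $n=0$; the $n=0$ term is absent from the logarithm's expansion. Thus \eqref{eq:exact-C} holds.

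Triangularity follows because a decoration with $r_j=g$ would require $\ell=0$, which is impossible. Hence all $r_j\le g-1$, so the coefficient involves only $R_0,\dots,R_{g-1}$.

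The main subtlety is well-definedness: the sum must be finite at fixed $\h$-order. Each decoration contributes $a_j+2r_j\ge1$ to the $\h$-degree, so at fixed order $2g-2$ both $n$ and the decorations are bounded. Hence only finitely many terms of the formal identity \eqref{eq:elliptic-log} contribute, and each $F^E$ is a single quasimodular form (as a germ in $\rho$). Applying \eqref{eq:elliptic-log} to these nilpotent-in-$\h$ times is therefore legitimate coefficientwise. I would state this explicitly as the only analytic point, since the remaining identification is bookkeeping already done in \cref{prop:local-relative}.
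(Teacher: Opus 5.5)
Your proposal is correct and follows the paper's proof: expand the times with the $(a,r)=(0,0)$ term removed, substitute into the cumulant expansion of \cref{lem:OP}, match the coupling degree $\sum_j(a_j+2r_j)=2g-2$ with the graph sum \eqref{eq:graphsum}, and use $a+2r>0$ for finiteness at each order. For triangularity you bound $r_j\le g-\ell\le g-1$ via $\ell\ge1$, while the paper uses $a+2r\le 2g-2$; the two are equivalent, and your explicit check that no $\h^0$ (genus one) term arises is a harmless addition.
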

\begin{proof}
Expand each time as
\[
 x_a=\sum_{\substack{r\ge0\\(a,r)\ne(0,0)}}
       (-1)^r\h^{a+2r}\dd^{a+2}R_r.
\]
An ordered selection of insertions has coupling degree
$\sum a_j+2\sum r_j=2\ell-2+2\sum r_j=2g-2$.
Thus \cref{lem:OP} reproduces \eqref{eq:graphsum}, including its signs and
factorials. Each allowed decoration has $a+2r>0$, so only finitely many
insertions occur at a fixed coupling degree. Moreover, $a+2r\le2g-2$
forces $r\le g-1$. This proves the triangularity.
\end{proof}

\section{Content regularity and the all genus transfer}\label{sec:formal}

\subsection{Coefficientwise meaning of the partition sum}
We first work over a commutative $\QQ$-algebra $K$; the geometric application
has $K=\CC$. The variable $t$ is independent of the formal coupling $\h$.
The intermediate Laurent series belong to the algebra
\begin{equation}\label{eq:Laurent-algebra}
 \cL_K=\left\{\sum_{m\ge0}\h^m f_m(t):
                    f_m(t)\in t^{-m}K[[t]]\right\}.
\end{equation}
To specify its topology, put
$F^n\cL_K=\{\sum_{m\ge n}\h^mf_m(t)\}$. With an auxiliary variable $z$,
the map
\begin{equation}\label{eq:completion-isomorphism}
 \sum_{m\ge0}\h^mt^{-m}a_m(t)\longmapsto
 \sum_{m\ge0}z^ma_m(t)
\end{equation}
identifies $\cL_K$ with $K[[t]][[z]]$ and $F^n$ with the $z$-adic
filtration. Thus the algebra is complete, and exponentiation and
logarithms on $F^1\cL_K$ and $1+F^1\cL_K$, respectively, are well defined.
This filtration is not the principal ideal $\h$-adic filtration:
$\h/t$ lies in $F^1$ but need not lie in $\h\cL_K$.
We also use the jointly complete ring $K[[t,\h]]$ for the resummed
expressions; its embedding in \eqref{eq:completion-isomorphism} sends
$\h$ to $tz$. The auxiliary symbol $\log t$ has its usual derivative;
no unexpanded logarithm remains in the times or their partition weights.

Write $A=\sum_{r\ge0}\h^{2r}A_r(t)$ and assume
\begin{equation}\label{eq:formal-input}
 \widehat R=\Phi+A,\qquad A\in K[[t,\h^2]],\qquad
 \rho=t\,v(t),\quad v(t)\in K[[t]]^\times.
\end{equation}
For $a\ge1$, the contribution of $f_0$ to $x_a$ has pole order $a$;
for $r\ge1$, the contribution to coupling degree $a+2r$ has pole order at
most $a+2r$. The $(a,r)=(0,0)$ logarithmic term is removed. Hence all
$x_a$ have strictly positive coupling order and satisfy the bound defining
\eqref{eq:Laurent-algebra}.

\begin{lemma}[Finite coefficient bound]\label[lemma]{lem:cutoff}
At coupling degree $m$, a partition of size $d$ in the non-vacuum trace
has $t$-valuation at least $d-m$. Therefore a coefficient $t^k\h^m$
receives contributions only from partitions of size $d\le k+m$.
\end{lemma}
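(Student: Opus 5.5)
The plan is to bound the $t$-valuation of each partition's contribution directly, separating the weight $\rho^{|\lambda|}$ from the exponential of the times. By \eqref{eq:formal-input}, $\rho^{d}=t^{d}v(t)^{d}$ with $v$ a unit, so the weight of a partition of size $d$ has $t$-valuation exactly $d$. The normalizing factor $P(\rho)^{-1}=1-\rho+\cdots$ lies in $K[[t]]$ and cannot lower valuations. Hence it remains to show that the coefficient of $\h^m$ in $\exp\bigl(\sum_a x_ap_{a+1}(\lambda)/(a+1)!\bigr)$ lies in $t^{-m}K[[t]]$ for each fixed $\lambda$.

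This follows from the algebra $\cL_K$ of \eqref{eq:Laurent-algebra}. The paragraph preceding the lemma shows that every $x_a$ lies in $F^1\cL_K$. The $f_0$ part of $x_a$ is $\h^a$ times a function with a pole of order $a$. The $\Phi$-corrections $b_r\h^{2r}\dd^{2r+a+2}f_0$ and the regular parts $A_r$ have coupling degree $a+2r$ and pole order at most $a+2r$. The subtraction in $x_0$ removes the only term with a logarithm. For fixed $\lambda$ the numbers $p_{a+1}(\lambda)$ are constants in $K$, so the exponent $\sum_a x_a p_{a+1}(\lambda)/(a+1)!$ lies in $F^1\cL_K$. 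Only finitely many $a$ contribute at each coupling degree, since $x_a\in\h^a(\cdots)$. Because $\cL_K$ is closed under multiplication (pole order adds along with coupling degree) and is $F$-complete, the exponential lies in $\cL_K$. Thus its $\h^m$ coefficient has $t$-valuation at least $-m$. Multiplying by $\rho^d$ gives valuation at least $d-m$.

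The second assertion then follows immediately. A term $t^k\h^m$ can come from a partition of size $d$ only if $k\ge d-m$, that is, $d\le k+m$. There are finitely many such partitions, so the trace is coefficientwise a finite sum, and taking the logarithm in $1+F^1$ is well defined.

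The only point needing care, and the main obstacle, is checking that the pole-order bound holds uniformly for all terms of $x_a$ in the shape required by $\cL_K$: coefficient of $\h^m$ in $t^{-m}K[[t]]$. In particular the $\log t$ pieces of $\dd^{k}f_0$ for $k\ge3$ must disappear. This works because $\dd^3 f_0=1/t$, so all derivatives of order at least $3$ are pure Laurent monomials. The $a=0$ term $\dd^2 f_0=\log t+\cdots$ is exactly what the subtraction $\delta_{a0}R_0''$ removes. The subleading $\h^{2r}$ parts of $x_0$ involve $\dd^{2r+2}f_0$ with $r\ge1$ and are therefore also logarithm free.

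The "non-vacuum" qualifier concerns only $\lambda\neq\varnothing$, where $d\ge1$. The statement still holds for $d=0$, and there the argument gives valuation at least $-m$.
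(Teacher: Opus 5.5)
Your argument is correct and is essentially the paper's proof: the $\h^m$ coefficient of any product of times has pole order at most $m$, and $\rho^d=t^dv(t)^d$ raises the valuation by exactly $d$. You justify the pole-order bound, as the paper does in the paragraph before the lemma, by placing each $x_a$ in $F^1\cL_K$ and noting that the subtraction in $x_0$ removes the only logarithm. The only blemish is notational: the $\Phi$-contribution to $x_a$ is $b_r\h^{a+2r}\dd^{a+2r+2}f_0$, not $b_r\h^{2r}\dd^{2r+a+2}f_0$, as your stated coupling degree $a+2r$ already presupposes.
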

\begin{proof}
At degree $m$, a product of times has pole order at most $m$. The factor
$\rho^d=t^dv(t)^d$ raises the valuation by $d$. The stated bound follows.
There are finitely many partitions with bounded size. For a fixed $m$,
the remaining sum is thus a well defined Laurent series in $t$.
\end{proof}

This lemma is also the justification for later regrouping. We do not
interchange divergent numerical sums: the two expansions are compared on
each coefficient, for which only finitely many partitions occur.

\subsection{The singular kernel and its central difference}
The first terms of the universal model are
\begin{equation}\label{eq:Phi-explicit}
 \Phi=f_0-\frac{\h^2}{24}\log t
          -\frac{7\h^4}{5760t^2}
          +\frac{31\h^6}{161280t^4}+O(\h^8).
\end{equation}
The signs are chosen for $\widehat R=\sum(-1)^r\h^{2r}R_r$.

\begin{lemma}[Central difference identity]\label[lemma]{lem:central}
In the formal Taylor sense,
\begin{equation}\label{eq:central}
 \Phi'\left(t+\frac\h2,\h\right)
 -\Phi'\left(t-\frac\h2,\h\right)=\h\log t.
\end{equation}
\end{lemma}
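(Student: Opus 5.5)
We will rewrite the central difference as a single operator identity acting on $f_0'$. Let $\Phi=\cB(\h\dd)f_0$, so that $\Phi'=\cB(\h\dd)f_0'$. In the formal Taylor sense, translation by $\pm\h/2$ is the operator $e^{\pm\h\dd/2}$. The left side of \eqref{eq:central} is therefore
\[
 \bigl(e^{\h\dd/2}-e^{-\h\dd/2}\bigr)\cB(\h\dd)f_0'
 =2\sinh\!\left(\frac{\h\dd}2\right)\frac{\h\dd}{2\sinh(\h\dd/2)}f_0'
 =\h\dd f_0'=\h f_0''.
\]
Since $f_0=\frac12t^2\log t-\frac34t^2$, we have $f_0'=t\log t+\frac12t-\frac32t=t\log t-t$ and $f_0''=\log t$. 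This gives $\h\log t$, as claimed.

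The only point requiring care is what "formal Taylor sense" means when $\log t$ and the negative powers in \eqref{eq:Phi-explicit} are present. We will interpret both sides in the algebra $\cL_K$ of \eqref{eq:Laurent-algebra}, adjoined with the symbol $\log t$. There $\Phi'(t\pm\h/2)$ is defined as $\sum_{n}\frac{(\pm\h/2)^n}{n!}\dd^n\Phi'$. We need this sum to converge in the filtration $F^\bullet$. For $n\ge2$, $\dd^n$ applied to $\h^{2j}\dd^{2j}f_0'$ produces pure powers $t^{1-2j-n}$ times $\h^{2j+n}$, which lies in $F^{2j+n}$ with the correct pole bound $t^{-(2j+n)}$ up to one extra factor of $t$. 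The terms with $n\le1$ contribute $\log t$ only at $\h$-degree at most $1$. Hence both shifted series are well defined, and they equal the result of applying the formal power series operators $e^{\pm\h\dd/2}$ to $\Phi'$.

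Composition of formal power series in $\h\dd$ is then legitimate, because $\dd$ commutes with itself and each operator is applied coefficientwise in the filtration. The identity $2\sinh(z/2)\cB(z)=z$ holds in $\QQ[[z]]$, so the computation above is justified.

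The main obstacle, which is mild, is this justification of the operator calculus on a function containing $\log t$. It is resolved by the observation that $\dd^2 f_0'=1/t$: every term beyond first order is a Laurent monomial, and only the finitely many low-order terms involve $\log t$. These low-order terms are handled by hand. The term $\h^0$ cancels in the difference, and the term $\h^1$ gives $\h\,\dd f_0'=\h\log t$. The terms of order $\h^{\ge2}$ cancel by the identity $2\sinh(z/2)\cB(z)=z$ in $\QQ[[z]]$, since that product has no terms of degree $\ge2$ in $z$.
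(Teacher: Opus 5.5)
Your proposal is correct and follows the paper's proof. Both write the difference as $2\sinh(\h\dd/2)\cB(\h\dd)\dd f_0$ and use $2\sinh(z/2)\cB(z)=z$ together with $f_0''=\log t$. Your filtration bookkeeping is just a more explicit version of the paper's remark that each coupling coefficient is a finite differential expression.
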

\begin{proof}
The operator $\cB(\h\dd)$ is a formal power series in $\h\dd$ with
constant term one. Multiplication by $2\sinh(\h\dd/2)$ gives
\[
 2\sinh\left(\frac{\h\dd}{2}\right)\cB(\h\dd)\dd f_0
 =\h\dd^2f_0=\h\log t.
\]
The left side is precisely the difference in \eqref{eq:central}.
Each coupling coefficient is a finite differential expression, so the
operator computation is coefficientwise valid.
\end{proof}

Difference operators provide another useful description of singular
Gromov-Witten potentials. Alim \cite{AlimDiff} proves a difference
equation for the resolved conifold and its genus zero
Gopakumar-Vafa contribution. The author's \emph{Difference Equations
and Reconstruction of Local Gopakumar-Vafa Spectra for Threefold Flops}
(\href{https://arxiv.org/abs/2609.10488v2}{arXiv:2609.10488v2}) studies
central difference forcing and recovery of finite local spectra from
multiple cover data. Equation~\eqref{eq:central} serves a different
purpose: its right hand side fixes a singular kernel whose rowwise
differences resum the actual elliptic correction. Unlike reconstruction from finite multiple cover data, the problem
here is cancellation in the complete elliptic correction, including
all regular lower genus terms.

For a partition $\lambda$, put
\begin{equation}\label{eq:rowcoords}
 u_i=\lambda_i-i+\frac12,\qquad v_i=-i+\frac12.
\end{equation}
These $v_i$ are row coordinates, unrelated to the nome unit $v(t)$.

\begin{lemma}[Content identity]\label[lemma]{lem:content}
For every partition $\lambda$,
\begin{equation}\label{eq:content}
 \exp\left\{\frac1\h\sum_i
       \bigl[\Phi'(t+\h u_i,\h)-\Phi'(t+\h v_i,\h)\bigr]\right\}
 =\prod_{\square\in\lambda}(t+\h c(\square)).
\end{equation}
The exponential on the left is interpreted after extracting its constant
term $|\lambda|\log t$.
\end{lemma}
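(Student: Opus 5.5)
The plan is to reduce the statement to a row-by-row telescoping argument based on \cref{lem:central}. Write $\lambda=(\lambda_1,\ldots,\lambda_k)$ with $\lambda_i=0$ for $i>k$. Then $u_i=v_i$ for $i>k$, so the sum on the left of \eqref{eq:content} is a finite sum over $1\le i\le k$. For a fixed row $i$, the box coordinates in that row are $(i,j)$ with $1\le j\le\lambda_i$ and contents $c(i,j)=j-i$. The key observation is
\[
 v_i+j-\tfrac12=j-i=c(i,j),\qquad v_i+(j-1)+\tfrac12=c(i,j),
\]
so the interval from $v_i$ to $u_i=v_i+\lambda_i$ splits into $\lambda_i$ unit steps, each centered at a content.

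First I will telescope each row:
\[
 \Phi'(t+\h u_i,\h)-\Phi'(t+\h v_i,\h)
 =\sum_{j=1}^{\lambda_i}\Bigl[\Phi'\bigl(s_{ij}+\tfrac\h2,\h\bigr)-\Phi'\bigl(s_{ij}-\tfrac\h2,\h\bigr)\Bigr],
 \qquad s_{ij}=t+\h c(i,j).
\]
Next I apply \cref{lem:central} with $t$ replaced by the shifted argument $s_{ij}$. This gives $\h\log(t+\h c(i,j))$ for each box. Dividing by $\h$ and summing over all boxes, the exponent becomes $\sum_{\square}\log(t+\h c(\square))$. Exponentiating then yields $\prod_{\square}(t+\h c(\square))$, which is the right-hand side of \eqref{eq:content}.

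The main obstacle is justifying the substitution $t\mapsto t+\h c$ in \eqref{eq:central}, since $\Phi'$ involves $\log t$ and negative powers of $t$. I will interpret every shift as the formal Taylor expansion $e^{\h c\,\dd}$ acting on each coupling coefficient; the expansion in powers of $\h$ is already built into \eqref{eq:central}. Under this convention the identity $e^{\h c\dd}\bigl(2\sinh(\h\dd/2)\cB(\h\dd)\dd f_0\bigr)=\h\,e^{\h c\dd}\log t$ holds coefficientwise, because translation operators commute with $\cB(\h\dd)$.

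It remains to identify the right-hand side. I will write $e^{\h c\dd}\log t=\log t+\log(1+\h c/t)$, where the second term is an element of $F^1\cL_K$. The total constant $\sum_\square\log t=|\lambda|\log t$ is exactly the term extracted in the statement. The remainder $\sum_\square\log(1+\h c(\square)/t)$ lies in $F^1\cL_K$, so its exponential is well defined by the completeness discussion around \eqref{eq:completion-isomorphism}. That exponential equals $\prod_\square(1+\h c(\square)/t)$. Reinstating the factor $t^{|\lambda|}$ gives the polynomial content product, completing the plan.
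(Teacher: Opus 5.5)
Your proposal is correct and matches the paper's proof: it telescopes each row via \eqref{eq:central} centered at $t+\h c(i,j)$, reads $\log(t+\h c)$ as $\log t+\log(1+\h c/t)$, and exponentiates the finite sum. Your added remarks on formal Taylor shifts and on exponentiation in $F^1\cL_K$ only make explicit the coefficientwise justification that the paper leaves implicit.
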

\begin{proof}
For a fixed row $i$, apply \eqref{eq:central} with center
$t+\h(j-i)$ for $j=1,\ldots,\lambda_i$. Adjacent endpoint terms cancel,
leaving
\[
 \frac1\h\bigl[\Phi'(t+\h u_i)-\Phi'(t+\h v_i)\bigr]
 =\sum_{j=1}^{\lambda_i}\log(t+\h(j-i)).
\]
The logarithm on the right means
$\log t+\log(1+\h(j-i)/t)$. Exponentiation of this finite sum, followed by
multiplication over rows, proves \eqref{eq:content}.
\end{proof}

For example, the three boxes of $\lambda=(2,1)$ have contents $0,1,-1$.
The right side is $t(t+\h)(t-\h)=t^3-t\h^2$. Terms with many apparent
negative powers in the unresummed exponential have combined into a
polynomial. This elementary example illustrates why termwise regularity
of the original graph sum is not required.

Young diagram sums are also fundamental to the topological vertex
\cite{AKMV} and its refined extension \cite{IKV}. Kim-Li-Yagi-Zhu
\cite{KLYZ} construct a vertex formalism for symmetric matter in
five dimensional gauge theory, including an $\mathrm O7^+$-plane
configuration, whose partition functions are likewise diagram sums.
The common feature is diagrammatic reorganization. In the present
argument, its geometry is fixed by the stationary elliptic completed cycle
trace: the vacuum constants and descendant normalization are those of
\cref{lem:OP}, and the content identity acts on precisely that trace.

\subsection{Vacuum separation and the regular exponential}
For each partition, write
\begin{equation}\label{eq:vac-separate}
 \sum_{a\ge0}\frac{x_ap_{a+1}(\lambda)}{(a+1)!}=V+B_\lambda,
\end{equation}
where $V$ is the value at the empty partition. From
\eqref{eq:vac-constants} one obtains exactly
\begin{equation}\label{eq:V-formula}
 V=\sum_{j\ge1}b_j\h^{2j-2}\dd^{2j}\widehat R-b_1R_0''.
\end{equation}
In particular $V$ has positive coupling order, despite the individual
$\log t$ in its first summand.
Taylor expansion of the finite row differences gives
\begin{equation}\label{eq:B-lambda}
 B_\lambda=\frac1\h\sum_i
 [\widehat R'(t+\h u_i,\h)-\widehat R'(t+\h v_i,\h)]
 -|\lambda|R_0''.
\end{equation}
The second term occurs because the missing $(a,r)=(0,0)$ insertion
multiplies $p_1(\lambda)-p_1(\varnothing)=|\lambda|$.

It is useful to remove the entire coupling constant term from the regular
exponent. Define
\begin{equation}\label{eq:normalized-K}
 \widetilde K_\lambda(t,\h)=\frac1\h\sum_i
 [A'(t+\h u_i,\h)-A'(t+\h v_i,\h)]-|\lambda|A_0''(t).
\end{equation}
The numerator is divisible by $\h$, and its quotient at $\h=0$ is
$|\lambda|A_0''(t)$. Consequently
\begin{equation}\label{eq:K-positive}
 \widetilde K_\lambda\in\h K[[t,\h]].
\end{equation}
The exponential $\exp\widetilde K_\lambda$ is therefore defined over any
$\QQ$-algebra; no exponential of an arbitrary coefficient in $K$ is needed.

Combining \eqref{eq:content} and \eqref{eq:B-lambda} gives the exact weight
identity
\begin{equation}\label{eq:weight-content}
 \rho^{|\lambda|}e^{B_\lambda}
 =v(t)^{|\lambda|}
   \prod_{\square\in\lambda}(t+\h c(\square))
   e^{\widetilde K_\lambda(t,\h)}.
\end{equation}
Notice that no equality between $\rho$ and $e^{R_0''}$ has been imposed.
The independent unit $v(t)$ is visible on the right.

\begin{theorem}[Strong regularity, coefficient form]\label{thm:strong}
Under \eqref{eq:formal-input}, define $\cC$ by \eqref{eq:times-intro} and
$V$ by \eqref{eq:V-formula}. Then
\begin{equation}\label{eq:strong}
 \cN:=\cC-V\in t\h^2K[[t,\h^2]].
\end{equation}
\end{theorem}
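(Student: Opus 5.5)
The plan is to start from the vacuum separation \eqref{eq:vac-separate} and the exact weight identity \eqref{eq:weight-content}. Since $V$ does not depend on $\lambda$, it factors out of the trace:
\[
 \cC=V+\log\frac{\sum_\lambda \rho^{|\lambda|}e^{B_\lambda}}{P(\rho)}.
\]
Substituting \eqref{eq:weight-content} rewrites the numerator as
\[
 \sum_\lambda v^{|\lambda|}\prod_{\square}(t+\h c(\square))\,e^{\widetilde K_\lambda}.
\]
All identities are used coefficientwise, as \cref{lem:cutoff} permits: each coefficient of $t^k\h^m$ receives contributions from finitely many $\lambda$ only. Hence regrouping is legitimate. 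Moreover, the resummed expression will lie in $K[[t,\h]]$ once we check that each weight is a genuine power series and that the sum converges $t$-adically. The content product is a polynomial of total degree $|\lambda|$ in $(t,\h)$. By \eqref{eq:K-positive}, $e^{\widetilde K_\lambda}\in K[[t,\h]]$. The issue is convergence of the sum, because the content product contains $\h$-powers without $t$. I will handle this through the embedding $\h\mapsto tz$: each box factor becomes $t(1+zc(\square))$, so the $\lambda$-term is divisible by $t^{|\lambda|}$ in $K[[t]][[z]]$. This is the needed convergence, and it also shows consistency with the original $\cL_K$ computation.

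The key step is divisibility by $t$. Write $N(t,\h)=\sum_\lambda w_\lambda$ for the numerator, with $w_\varnothing=1$. Every nonempty $\lambda$ has the box $(1,1)$ of content $0$, so $t$ divides $w_\lambda$ in $K[[t,\h]]$. Hence $N\in 1+tK[[t,\h]]$. Likewise $P(\rho)=\sum_\lambda t^{|\lambda|}v^{|\lambda|}\in 1+tK[[t]]$. Therefore the quotient $N/P(\rho)$ lies in $1+tK[[t,\h]]$, and its logarithm $\cN$ lies in $tK[[t,\h]]$.

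Next I show that $\cN\in\h^2K[[t,\h^2]]$, and I split this into evenness and vanishing at $\h=0$. For evenness, replace $\h$ by $-\h$ and $\lambda$ by its conjugate $\lambda'$. Contents change sign, so the content product is invariant. For $\widetilde K$, I would check invariance via the original expression: $B_\lambda$ comes from $\sum_a x_ap_{a+1}(\lambda)/(a+1)!$, with $x_a$ of parity $(-1)^a$ in $\h$ and $p_{a+1}(\lambda')=(-1)^{a+1}p_{a+1}(\lambda)$ up to the constants. A cleaner route is to verify directly that the row-difference sum equals the corresponding column-difference sum: the Frobenius coordinates of $\lambda$ and $\lambda'$ are exchanged with a sign, and $A$ is even in $\h$. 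I expect this parity bookkeeping, particularly with the vacuum constant for $p_{a+1}$ with $a+1$ even, where $p_{2j}(\varnothing)=0$, to be the main technical obstacle. If needed, I will instead argue that $\cC$ and $V$ are both even by their definitions, using $x_a\in\h^a(\cdots)$ in $\h^2$ and the fact that $F^E$ vanishes unless $\sum a_j$ is even.

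For vanishing at $\h=0$, set $\h=0$ in the resummed form. Then $\widetilde K_\lambda|_{\h=0}=0$ by \eqref{eq:K-positive}, and the content product equals $t^{|\lambda|}$, so $N|_{\h=0}=\sum_\lambda (tv)^{|\lambda|}=P(\rho)$. Hence $\cN|_{\h=0}=0$. Combining these facts gives $\cN\in t\h^2K[[t,\h^2]]$, which is \eqref{eq:strong}.
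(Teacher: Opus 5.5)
Your proposal is correct and is essentially the paper's proof. It uses the same steps: vacuum separation, the content-product weight identity, the content-zero box $(1,1)$ giving $Z_A\in 1+tK[[t,\hbar]]$, the specialization $Z_A(t,0)=P(\rho)$, and pairing $\lambda$ with $\lambda'$ under $\hbar\mapsto-\hbar$ for evenness. The differences are cosmetic: you get convergence from the embedding $\hbar\mapsto tz$ instead of total $(t,\hbar)$-degree, and you can check parity directly on the resummed weights, since the content product and $\widetilde K_\lambda$ are both invariant under $(\hbar,\lambda)\mapsto(-\hbar,\lambda')$, which spares the separate evenness check of $V$ that the paper makes.
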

\begin{proof}
Let $Z_A$ be the sum over all partitions of the right side of
\eqref{eq:weight-content}. The content polynomial for a size $d$ partition
is homogeneous of total $(t,\h)$-degree $d$. Its other factors have
nonnegative total degree. The partition sum is therefore well defined in
$K[[t,\h]]$. Every nonempty partition contains $(1,1)$, a box of content
zero. Hence
\[
 Z_A\in1+tK[[t,\h]],\qquad P(\rho)\in1+tK[[t]].
\]
Vacuum separation and \cref{lem:cutoff} identify the original trace with
this resummed expression, and give
\begin{equation}\label{eq:logsplit}
 \cC=V+\log Z_A-\log P(\rho).
\end{equation}
The two logarithms in \eqref{eq:logsplit} are defined in the
$t$-adically complete subring $K[[\h]][[t]]$, and lie in
$tK[[t,\h]]$. Since $\widetilde K_\lambda(t,0)=0$, the weight at $\h=0$
is $v(t)^dt^d=\rho^d$, so $Z_A(t,0)=P(\rho)$. The quotient
$Z_A/P(\rho)$ therefore has constant coupling coefficient one.
The identity
$\log Z_A-\log P(\rho)=\log(Z_A/P(\rho))$ in the commutative
joint power series ring agrees, coefficientwise, with the $F^\bullet$
logarithm of that quotient in $\cL_K$. The finite bound of
\cref{lem:cutoff} justifies the comparison. Hence the difference lies
also in $\h K[[t,\h]]$, with no exchange of numerical limits.

It remains to prove evenness. Partition conjugation satisfies
\[
 p_k(\lambda')=(-1)^{k-1}p_k(\lambda).
\]
This includes the constants, since the even index vacuum constants vanish.
One may verify the nonconstant part from
\[
 \sum_i(e^{zu_i}-e^{zv_i})
 =2\sinh(z/2)\sum_{\square\in\lambda}e^{zc(\square)};
\]
conjugation reverses all contents. Also
$x_a(t,-\h)=(-1)^a x_a(t,\h)$. Pairing $\lambda$ and $\lambda'$ proves
that the full trace, hence its logarithm, is even in $\h$. Formula
\eqref{eq:V-formula} shows the same for $V$. The logarithmic difference
therefore belongs to $t\h^2K[[t,\h^2]]$.
\end{proof}

\begin{remark}[Regular perturbations and parameter dependence]
The theorem keeps every coefficient of $A$; it is not a statement obtained
by discarding the regular part of each $R_r$. Such a discard would generally
be invalid before resummation. Since the normalized exponential
\eqref{eq:normalized-K} has zero coupling constant term, the proof also
works with formal auxiliary parameters in $K$. This algebraic fact is not,
by itself, a geometric theorem about multiparameter degenerations.
\end{remark}

\subsection{The vacuum convolution}
For $g\ge2$,
\begin{equation}\label{eq:f0derivative}
 \dd^{2g}f_0=-(2g-3)!\,t^{2-2g}.
\end{equation}
The singular part of $[\h^{2r}]\Phi$ equals $(-1)^rr_rt^{2-2r}$ for
$r\ge2$, and the first two terms match \eqref{eq:low} with the chosen
signs. By \cref{thm:strong}, derivatives of the regular perturbation cannot
contribute to the principal part of $\cC_g$. Thus
\begin{equation}\label{eq:vac-conv}
 \Pol\cC_g=-(2g-3)!\sum_{j=1}^gb_jb_{g-j}\,t^{2-2g}.
\end{equation}

\begin{lemma}[Bernoulli convolution]\label[lemma]{lem:Bernoulli}
For every $g\ge2$,
\begin{equation}\label{eq:gamma}
 -(2g-3)!\sum_{j=1}^gb_jb_{g-j}
 =\gamma_g:=u_g-(-1)^gr_g.
\end{equation}
\end{lemma}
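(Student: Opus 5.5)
The identity follows from a closed form for the square of the generating function $\cB$. My plan is to compute $\cB(z)^2$, read off the full convolution $\sum_{j=0}^g b_jb_{g-j}$, remove the $j=0$ term using $b_0=1$, and compare with \eqref{eq:coefficients}. Only elementary Bernoulli identities are needed; nothing from the partition machinery enters.

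\emph{Step 1: the square of the kernel.} Use $\frac{d}{dz}\coth(z/2)=-\frac{1}{2\sinh^2(z/2)}$ to write
\[
 \cB(z)^2=\frac{z^2}{4\sinh^2(z/2)}=-\frac{z^2}{2}\,\frac{d}{dz}\coth\frac z2 .
\]
Insert the classical expansion $\frac z2\coth\frac z2=\sum_{n\ge0}B_{2n}z^{2n}/(2n)!$, that is, $\coth\frac z2=\sum_{n\ge0}2B_{2n}z^{2n-1}/(2n)!$. Differentiating termwise and multiplying by $-z^2/2$ gives
\[
 \cB(z)^2=-\sum_{n\ge0}(2n-1)\frac{B_{2n}}{(2n)!}z^{2n}.
\]
The constant term is $1$, as it must be. Comparing coefficients of $z^{2g}$ yields
\[
 \sum_{j=0}^g b_jb_{g-j}=-(2g-1)\frac{B_{2g}}{(2g)!}.
\]

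\emph{Step 2: remove the $j=0$ term.} The omitted term is $b_0b_g=b_g=B_{2g}(1/2)/(2g)!$. Using $B_{2g}(1/2)=(2^{1-2g}-1)B_{2g}$, as in \eqref{eq:vac-constants}, we get
\[
 \sum_{j=1}^g b_jb_{g-j}=-\bigl(2g-2+2^{1-2g}\bigr)\frac{B_{2g}}{(2g)!}.
\]
Multiplying by $-(2g-3)!$ and using $(2g)!/(2g-3)!=2g(2g-1)(2g-2)$, the left side of \eqref{eq:gamma} becomes
\[
 \bigl(2g-2+2^{1-2g}\bigr)\frac{B_{2g}}{2g(2g-1)(2g-2)}.
\]

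\emph{Step 3: match with $u_g-(-1)^gr_g$.} Put $u_g$ over the same denominator:
\[
 u_g=\frac{(2g-1)B_{2g}}{2g(2g-1)(2g-2)}.
\]
For $r_g$, the only delicate point is the sign of $|B_{2g}|$. Since $|B_{2g}|=(-1)^{g+1}B_{2g}$, we have $(-1)^g|B_{2g}|=-B_{2g}$, so
\[
 -(-1)^gr_g=-\bigl(1-2^{1-2g}\bigr)\frac{B_{2g}}{2g(2g-1)(2g-2)}.
\]
Adding the two terms gives numerator $(2g-1)-1+2^{1-2g}=2g-2+2^{1-2g}$, which matches Step 2.

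The main obstacle is bookkeeping: the factor $2$ in the $\coth$ expansion, the sign of the $j=0$ term, and the parity convention for $|B_{2g}|$. I would check the result numerically at $g=2$. There $b_1=-1/24$ and $b_2=7/5760$, so the left side is $-(b_1^2+2b_2)=-(1/576+7/2880)=-1/240$. On the other side, $u_2=-1/240$ and $r_2=0$, in agreement.
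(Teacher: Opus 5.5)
Your Steps 1--3 are correct and follow essentially the paper's route. The paper obtains $\cB(z)^2=z^2e^z/(e^z-1)^2=\sum_{g\ge0}(1-2g)B_{2g}z^{2g}/(2g)!$ by subtracting $z$ times the derivative of $z/(e^z-1)$ from $z/(e^z-1)$; this is your $\coth$ computation written with exponentials. It then reads off $-(2g-3)!\sum_{j=0}^g b_jb_{g-j}=u_g$ and identifies the removed $j=0$ term $-(2g-3)!\,b_g$ with $(-1)^gr_g$, which is what your common-denominator bookkeeping verifies.

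Your $g=2$ sanity check, however, contains two errors that cancel. First, $-(b_1^2+2b_2)$ is the full convolution including $j=0$, so it equals $u_2$; the lemma's left side is $-(b_1^2+b_2)=-\tfrac{17}{5760}$. Second, $r_2=-\tfrac{7}{5760}\neq0$. With both corrected, $\gamma_2=-\tfrac1{240}+\tfrac7{5760}=-\tfrac{17}{5760}$, so the check does pass and matches Table~\ref{tab:coefficients}.
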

\begin{proof}
The identity
\begin{equation}\label{eq:B-square}
 \cB(z)^2=\frac{z^2e^z}{(e^z-1)^2}
 =\sum_{g\ge0}(1-2g)\frac{B_{2g}}{(2g)!}z^{2g}
\end{equation}
follows by subtracting $z$ times the derivative of $z/(e^z-1)$ from
$z/(e^z-1)$. The odd linear term cancels. Consequently
\[
 -(2g-3)!\sum_{j=0}^gb_jb_{g-j}
 =\frac{B_{2g}}{2g(2g-2)}=u_g.
\]
The $j=0$ term is $-(2g-3)!b_g=(-1)^gr_g$, because
$B_{2g}(1/2)=(2^{1-2g}-1)B_{2g}$ and
$B_{2g}=(-1)^{g-1}|B_{2g}|$. Removing it proves \eqref{eq:gamma}.
\end{proof}

For reference, the first values are displayed in \cref{tab:coefficients}.
They are consequences of the convolution, not independent enumerations.
\begin{table}[ht]
\centering
\caption{Universal relative, correction, and ordinary polar coefficients.}
\label{tab:coefficients}
\renewcommand{\arraystretch}{1.7}
\begin{tabular}{c c c c}\toprule
$g$ & $r_g$ & $\gamma_g$ & $u_g$\\\midrule
2 & $-7/5760$ & $-17/5760$ & $-1/240$\\
3 & $-31/161280$ & $43/53760$ & $1/1008$\\
4 & $-127/1290240$ & $-769/1290240$ & $-1/1440$\\\bottomrule
\end{tabular}
\end{table}

\subsection{Polar defects and proof of the gap transfer}
For a meromorphic germ, let $\Pol$ denote its full principal part. Define
\begin{equation}\label{eq:polar-defects}
 \Delta_g^U=\Pol U_g-u_gt^{2-2g},\qquad
 \Delta_g^R=\Pol R_g-r_gt^{2-2g}.
\end{equation}
The subtraction removes the expected monomial; the remaining Laurent
polynomial records an incorrect leading coefficient as well as any
subleading polar terms.

\begin{proposition}[Single genus polar defect identity]\label[proposition]{prop:polar-defect}
Fix $g\ge2$. Assume the low genus conditions and exact correspondence
of \cref{thm:transfer-intro}, and assume the relative gaps only in
 genera $2\le r<g$. Then
\begin{equation}\label{eq:polar-defect-identity}
 \Delta_g^U=(-1)^g\Delta_g^R.
\end{equation}
In particular, at this genus the ordinary gap holds if and only if the
relative gap holds. No genus $g$ gap is needed to determine the polar
part of $\cC_g$.
\end{proposition}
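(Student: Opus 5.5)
The plan is to take principal parts of the exact identity $U_g=(-1)^gR_g+\cC_g$ from \cref{prop:local-relative} and compute $\Pol\cC_g$ using only lower genus information. Since $\Pol$ is linear, this gives $\Pol U_g=(-1)^g\Pol R_g+\Pol\cC_g$. Everything then reduces to showing
\[
 \Pol\cC_g=\gamma_gt^{2-2g}=\bigl(u_g-(-1)^gr_g\bigr)t^{2-2g}.
\]
Subtracting $u_gt^{2-2g}$ from both sides then yields \eqref{eq:polar-defect-identity}.

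To compute $\Pol\cC_g$, I will put the lower genus data in the form required by \cref{thm:strong}. Set $\widehat R=\sum_r(-1)^r\h^{2r}R_r$.
\begin{itemize}
\item By \eqref{eq:low} and the hypotheses of \cref{thm:transfer-intro}, $R_0=f_0+\bR_0$ and $R_1=\frac1{24}\log t+\bR_1$ with $\bR_0,\bR_1$ holomorphic.
\item For $2\le r<g$, the assumed relative gaps give $R_r=r_rt^{2-2r}+\bR_r$ with $\bR_r$ holomorphic.
\item By \eqref{eq:Phi-explicit} and the remark after \eqref{eq:f0derivative}, $[\h^{2r}]\Phi=(-1)^r\bigl(r_rt^{2-2r}+\text{const}\bigr)$ for $r\ge2$, with the genus zero and one terms matching.
\end{itemize}
So the truncation of $\widehat R$ through $\h^{2g-2}$ is $\Phi+A$ with $A$ holomorphic in $t$; possible constants in $[\h^{2r}]\Phi$ are absorbed into $A$.

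By \cref{prop:geometric-transfer}, $\cC_g$ depends only on $R_0,\dots,R_{g-1}$. I can therefore replace $R_g,R_{g+1},\dots$ by the corresponding coefficients of $\Phi$ without changing $\cC_g$; in particular no genus $g$ gap is used. The nome has a simple zero by \eqref{eq:nome-linear}, so $\rho=tv(t)$ with $v(0)\neq0$.

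\Cref{thm:strong} then gives $\cC-V\in t\h^2\CC[[t,\h^2]]$, so $\cC_g-[\h^{2g-2}]V$ is holomorphic and vanishes at $t=0$. Hence $\Pol\cC_g=\Pol[\h^{2g-2}]V$.

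Next I read off this principal part from \eqref{eq:V-formula}:
\[
 [\h^{2g-2}]V=\sum_{j=1}^g b_j(-1)^{g-j}\dd^{2j}R_{g-j},
\]
where the $-b_1R_0''$ subtraction only removes the $\log t$ in the $j=1$ term. Derivatives of the holomorphic parts $\bR_{g-j}$ are holomorphic, so only the singular model terms $\dd^{2j}[\h^{2(g-j)}]\Phi$ contribute. Since $\Phi=\cB(\h\dd)f_0$, this coefficient is $b_{g-j}\,\dd^{2g}f_0$, which equals $-(2g-3)!\,b_{g-j}t^{2-2g}$ by \eqref{eq:f0derivative}. This step needs care with the $j=g$ term $b_g\dd^{2g}f_0$, with $\log$ terms, and with the constant ambiguities; none of them produces extra poles. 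Summing over $j$ gives \eqref{eq:vac-conv}, and \cref{lem:Bernoulli} identifies the result with $\gamma_gt^{2-2g}$.

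The main obstacle is legitimacy rather than computation. \Cref{thm:strong} is formal in $t$, while $R_r$, $U_g$ are meromorphic germs. I would handle this using the meromorphicity from \cref{sec:geometry}: at fixed genus each quantity is a convergent Laurent germ of bounded pole order, so identifying formal principal parts identifies actual ones. A secondary check is that the sign convention $\widehat R=\sum(-1)^r\h^{2r}R_r$ matches the $(-1)^{r_j}$ in \eqref{eq:graphsum}.
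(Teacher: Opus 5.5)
Your proposal is correct and follows essentially the same route as the paper: extend the lower genus data by the coefficients of $\Phi$ (harmless by the triangularity of \cref{prop:geometric-transfer}), apply \cref{thm:strong} to reduce $\Pol\cC_g$ to the vacuum term, evaluate that term via \eqref{eq:vac-conv} and \cref{lem:Bernoulli}, and take principal parts in $U_g=(-1)^gR_g+\cC_g$. Two small corrections: for $g\ge2$ the subtraction $-b_1R_0''$ lies in coupling degree zero and does not enter $[\h^{2g-2}]V$ at all, and $[\h^{2r}]\Phi=(-1)^rr_rt^{2-2r}$ holds exactly for $r\ge2$, so there are no constants to absorb into $A$.
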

\begin{proof}
The known coefficients of $\widehat R$ through $\h^{2g-2}$ agree with
$\Phi+A^{<g}$, where $A^{<g}$ is regular in $t$. Set all higher
coefficients of this auxiliary input equal to those of $\Phi$.
\Cref{prop:geometric-transfer} shows that this extension does not change
$\cC_g$, since its coefficient involves only $R_0,\ldots,R_{g-1}$.
Strong regularity and \cref{lem:Bernoulli} give
\[
 \Pol\cC_g=\bigl(u_g-(-1)^gr_g\bigr)t^{2-2g}.
\]
Taking principal parts in $U_g=(-1)^gR_g+\cC_g$ and subtracting $u_gt^{2-2g}$
proves \eqref{eq:polar-defect-identity}.
\end{proof}

\begin{corollary}[Finite genus transfer]\label[corollary]{cor:finite-genus}
For any $G\ge2$, under the hypotheses of \cref{thm:transfer-intro},
the ordinary gaps for $2\le g\le G$ are equivalent to the relative gaps
for $2\le g\le G$. No boundary conditions above genus $G$ are required.
\end{corollary}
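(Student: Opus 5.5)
The plan is an induction on $G$ built on \cref{prop:polar-defect}, which is a single-genus statement needing relative gaps only in genera strictly below the genus considered. The hypotheses of \cref{thm:transfer-intro} supply what that proposition needs: the low genus forms of $R_0,R_1$, the exact correspondence \eqref{eq:geom-intro} with \eqref{eq:times-intro}, meromorphic germs, and a nome with a simple zero. I prove the two implications separately, by induction on $g$ from $2$ to $G$, using at each genus only the data of genera $\le g$.

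\emph{Relative gaps imply ordinary gaps.} Assume $R_g=r_gt^{2-2g}+O(1)$ for $2\le g\le G$. Fix $g\le G$. The relative gaps hold in genera $2\le r<g$, so \cref{prop:polar-defect} gives $\Delta_g^U=(-1)^g\Delta_g^R$. The assumed genus $g$ relative gap says $\Delta_g^R=0$, so $\Delta_g^U=0$. This is exactly the ordinary gap at genus $g$, since $U_g$ is a meromorphic germ and $\Delta_g^U$ is its principal part minus the expected monomial.

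\emph{Ordinary gaps imply relative gaps.} This direction needs a genuine induction, because \cref{prop:polar-defect} requires the lower relative gaps, which are now conclusions rather than hypotheses. Assume the ordinary gaps for $2\le g\le G$, and suppose the relative gaps are already proved for $2\le r<g$; this condition is vacuous when $g=2$. Then \cref{prop:polar-defect} applies at genus $g$ and gives $(-1)^g\Delta_g^R=\Delta_g^U=0$. Hence the relative gap holds at genus $g$, which completes the induction up to $G$.

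\emph{No higher genus input.} The only point requiring care is that nothing above genus $G$ is used. Inside the proof of \cref{prop:polar-defect}, the higher coefficients of the auxiliary input are set equal to those of $\Phi$. That choice is harmless because of the triangularity in \cref{prop:geometric-transfer}: $\cC_g$ depends only on $R_0,\dots,R_{g-1}$. Thus at each genus $g\le G$ the argument uses only $R_0,\dots,R_g$ and $U_g$, and no boundary condition above genus $G$ enters. I expect no real obstacle: the work is in \cref{prop:polar-defect}, and the corollary is only the bookkeeping that orders the two implications so that each use of the proposition has its lower-genus relative gaps already available.
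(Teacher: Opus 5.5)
Your proposal is correct and follows the paper's proof: the relative-to-ordinary direction applies \cref{prop:polar-defect} at each genus $g\le G$, and the converse is an induction starting from $g=2$, where the lower-genus assumption is vacuous. Your remark that the triangularity of $\cC_g$ keeps data above genus $G$ out of the argument is the right justification for the final sentence of the corollary.
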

\begin{proof}
If the relative gaps are known through genus $G$, apply
\cref{prop:polar-defect} at each genus to obtain the ordinary gaps.
Conversely, start at $g=2$, where the lower genus assumption is empty.
The same proposition gives the relative genus two gap. Induction on
$g\le G$ proves the remaining relative gaps.
\end{proof}

\begin{proof}[Proof of \cref{thm:transfer-intro}]
Apply \cref{cor:finite-genus} for every finite $G$. A meromorphic germ
with zero principal part is holomorphic, so the conclusion has the
claimed fixed genus analytic meaning.
\end{proof}

\begin{proof}[Proof of \cref{thm:main}]
The low genus conditions and meromorphicity were established in
\eqref{eq:low} and the paragraph following it.
\Cref{prop:geometric-transfer} gives the exact correspondence required for
\cref{thm:transfer-intro}, and \eqref{eq:nome-linear} gives the simple zero.
It remains to normalize the ordinary gap input.

By \cref{lem:period-scale}, Brini's coordinate is exactly
$t_{\mathrm{CF}}=T=\sqrt3t$. His ordinary theorem, identified explicitly
before \eqref{eq:ordinary-input}, reads
\[
 U_g=\frac{3^{g-1}B_{2g}}{2g(2g-2)}T^{2-2g}+O(1).
\]
Since $3^{g-1}T^{2-2g}=t^{2-2g}$, this is exactly the ordinary gap with
coefficient $u_g$ in \eqref{eq:coefficients}. Applying
\cref{thm:transfer-intro} proves \eqref{eq:main-gap}.
\end{proof}

\begin{remark}
The induction does not use a genus-$g$ relative boundary condition.
Only lower genus relative germs enter $\cC_g$, and the Bernoulli
convolution proves the compatibility of their singular model with the
ordinary input. This distinction is essential for the reconstruction in
\cref{sec:jets}, where the resulting gap is subsequently used to determine
holomorphic ambiguities.
\end{remark}

\section{Finite parts, regular coefficients, and reconstruction}\label{sec:jets}

\subsection{A finite part relation}
The factor $t$ in \eqref{eq:strong} strengthens the information obtained
from a gap alone. Let $\bR_r,\bU_g$ be as in
\cref{sec:intro}, and put $A_r=(-1)^r\bR_r$. Define
\begin{equation}\label{eq:N-genus}
 \cN_g=[\h^{2g-2}](\cC-V),\qquad g\ge2.
\end{equation}
Then $\cN_g\in t\CC[[t]]$.

\begin{proposition}[Finite part identity]\label[proposition]{prop:finitepart}
For every $g\ge2$,
\begin{equation}\label{eq:finitepart}
 \bU_g(0)-(-1)^g\bR_g(0)
 =\sum_{j=1}^g b_j(-1)^{g-j}\bR_{g-j}^{(2j)}(0).
\end{equation}
More precisely, the difference between the corresponding functions of $t$
and the sum of regular derivatives is divisible by $t$.
\end{proposition}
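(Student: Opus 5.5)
The identity follows by taking the genus $g$ coefficient of the decomposition $\cC=V+\cN$ from \cref{thm:strong} and inserting it into $U_g=(-1)^gR_g+\cC_g$ from \cref{prop:local-relative}. As in the proof of \cref{prop:polar-defect}, I first replace $\widehat R$ by the auxiliary input whose coefficients through $\h^{2g-2}$ are the actual ones and whose higher coefficients are those of $\Phi$. By \cref{prop:geometric-transfer} this does not change $\cC_g$. The gaps in genera below $g$ are now available from \cref{thm:main}, so this input has the form $\Phi+A$ with $A$ regular. Explicitly, $A=\sum_r\h^{2r}A_r$ with $A_r=(-1)^r\bR_r$ for $r<g$, and $A_r=0$ above. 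This sign matches the convention $\widehat R=\sum(-1)^r\h^{2r}R_r$, since $[\h^{2r}]\Phi$ carries the singular model $(-1)^r r_r t^{2-2r}$ together with the low genus terms. Then \cref{thm:strong} gives $\cC_g=[\h^{2g-2}]V+\cN_g$ with $\cN_g\in t\CC[[t]]$.

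Next I compute $[\h^{2g-2}]V$ from \eqref{eq:V-formula}. The term $\h^{2j-2}\dd^{2j}\widehat R$ contributes $(-1)^{g-j}b_j\dd^{2j}R_{g-j}$ at this degree, for $1\le j\le g$. The subtraction $-b_1R_0''$ only affects degree $\h^0$, so it is irrelevant for $g\ge2$. I then split each $R_{g-j}$ into its singular model and its regular part $\bR_{g-j}$.

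The singular pieces are exactly $[\h^{2g-2}]$ of $\sum_j b_j\h^{2j-2}\dd^{2j}\Phi$. Using $\dd^{2j}f_0$, $\dd^{2j}\log t$ and \eqref{eq:f0derivative}, each of them is a pure monomial multiple of $t^{2-2g}$. For the top term $j=g$, the piece $\dd^{2g}f_0$ is also a monomial, and for $j=g-1$ the piece $\dd^{2g-2}\log t$ is a monomial. Their sum is $\gamma_g t^{2-2g}$ by \eqref{eq:vac-conv} and \cref{lem:Bernoulli}, with no finite part, because every singular contribution is a pure negative power and none is constant. The regular pieces give exactly $\sum_{j=1}^g b_j(-1)^{g-j}\bR_{g-j}^{(2j)}$.

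Substituting $U_g=u_gt^{2-2g}+\bU_g$ and $R_g=r_gt^{2-2g}+\bR_g$, the polar monomials cancel because $\gamma_g=u_g-(-1)^gr_g$. What remains is
\[
\bU_g-(-1)^g\bR_g-\sum_{j=1}^g b_j(-1)^{g-j}\bR_{g-j}^{(2j)}=\cN_g\in t\CC[[t]],
\]
and evaluating at $t=0$ gives \eqref{eq:finitepart}.

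The main point needing care is the first step, that \cref{thm:strong} applies. This requires the lower genus gaps to be known, so that $A$ is genuinely regular, and this is supplied by \cref{thm:main}. It also requires checking that the logarithmic pieces in $R_0$ and $R_1$ are exactly absorbed into $\Phi$ with the stated signs. Granting that, the statement is a direct coefficient extraction.
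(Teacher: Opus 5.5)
Your proposal is correct and follows the paper's own argument. You extract $[\h^{2g-2}]V$, identify its universal part as $\gamma_gt^{2-2g}$ through the Bernoulli convolution and its regular part as $\sum_j b_j(-1)^{g-j}\bR_{g-j}^{(2j)}$, then subtract the universal poles from $U_g=(-1)^gR_g+\cC_g$ and use $\cN_g\in t\CC[[t]]$ from strong regularity. The paper does exactly this; your truncation to an auxiliary input is harmless but not needed, since \cref{thm:main} already makes the full $A=\sum_r\h^{2r}(-1)^r\bR_r$ regular.
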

\begin{proof}
At degree $\h^{2g-2}$ in \eqref{eq:V-formula}, one has $j+r=g$.
The universal part is exactly $\gamma_gt^{2-2g}$ by
\cref{lem:Bernoulli}. The remainder of the vacuum term is
\[
 \sum_{j=1}^g b_j A_{g-j}^{(2j)}(t)
 =\sum_{j=1}^g b_j(-1)^{g-j}\bR_{g-j}^{(2j)}(t).
\]
The term $-b_1R_0''$ has degree zero and does not occur for $g\ge2$.
Subtract the universal poles from \eqref{eq:geom-intro} and use
$\cN_g(0)=0$.
\end{proof}

For $g=2$ this becomes
\begin{equation}\label{eq:finite2}
 \bU_2(0)-\bR_2(0)
 =\frac1{24}\bR_1''(0)+\frac7{5760}\bR_0^{(4)}(0).
\end{equation}
The identity concerns the full transformed potentials. It does not assert
that each original positive degree graph or each original enumerative
summand vanishes separately at the conifold.

\subsection{Finite partition cutoffs for regular jets}
Let $\Dur(\lambda)$ be the number of diagonal boxes of $\lambda$.
The content polynomial has precisely one factor $t$ for each diagonal
box, and hence is divisible by $t^{\Dur(\lambda)}$.

\begin{proposition}[Regular jet cutoff]\label[proposition]{prop:jetcutoff}
To compute $[t^k\h^m](\cC-V)$ for $k\ge1$, it suffices to retain partitions
satisfying
\begin{equation}\label{eq:jetcutoff}
 |\lambda|\le k+m,\qquad \Dur(\lambda)\le k,
\end{equation}
form the truncated sum in \eqref{eq:weight-content}, and then take its
truncated logarithm and subtract $\log P(\rho)$.
\end{proposition}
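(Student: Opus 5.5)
The plan is to work directly with the resummed expression from the proof of \cref{thm:strong}. There we have $\cC-V=\log Z_A-\log P(\rho)$, with
\[
 Z_A=\sum_\lambda v(t)^{|\lambda|}\prod_{\square\in\lambda}(t+\h c(\square))\,e^{\widetilde K_\lambda(t,\h)}.
\]
By \cref{lem:cutoff} and the weight identity \eqref{eq:weight-content}, this holds coefficientwise. So it suffices to show two things. First, the coefficient $[t^k\h^m]$ of $\log Z_A$ depends only on the terms of $Z_A$ coming from partitions obeying \eqref{eq:jetcutoff}. Second, truncating the sum to those partitions and then taking the logarithm, truncated at order $t^k\h^m$, gives the same coefficient. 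The $\log P(\rho)$ term involves only $t$ and is subtracted unchanged.

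The first key step is a bidegree bound for each weight. The content product of a size $d$ partition is homogeneous of total $(t,\h)$-degree $d$. Its factor for each diagonal box is exactly $t$, so it is divisible by $t^{\Dur(\lambda)}$. The factors $v(t)^d$ and $e^{\widetilde K_\lambda}$ lie in $K[[t,\h]]$ and have nonnegative total degree. Hence every monomial $t^a\h^b$ in the weight of $\lambda$ satisfies
\[
 a\ge\Dur(\lambda),\qquad a+b\ge|\lambda|.
\]
Consequently the weight of $\lambda$ contributes nothing to $[t^{a}\h^{b}]$ unless $|\lambda|\le a+b$ and $\Dur(\lambda)\le a$.

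The second step handles the logarithm. Write $Z_A=1+W$ with $W\in tK[[t,\h]]$. Then
\[
 \log Z_A=\sum_{n\ge1}\frac{(-1)^{n-1}}{n}W^n,
\]
and $[t^k\h^m]W^n$ only involves coefficients $[t^{a}\h^{b}]W$ with $1\le a\le k$ and $b\le m$. Each such coefficient receives contributions only from partitions with $|\lambda|\le a+b\le k+m$ and $\Dur(\lambda)\le a\le k$. These are exactly the retained partitions. The factors in a product have $t$-degrees at least $1$ summing to $k$, so $n\le k$ and the logarithm may be truncated at $n=k$. This is the truncated logarithm of the truncated sum.

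The main point needing care is that the bounds apply to monomials of $W$ itself, not merely to the total degree of the product. A single partition term might have a large $\h$-degree but small $t$-degree. That is harmless, since we only read off $b\le m$, but the cutoff argument must use both inequalities separately rather than total degree alone. Because everything happens inside the jointly graded ring $K[[t,\h]]$, no exchange of numerical limits occurs. Finally, the condition $k\ge1$ is only needed so that the cutoff $\Dur(\lambda)\le k$ is informative. For $k=0$ the coefficient vanishes by \cref{thm:strong}.
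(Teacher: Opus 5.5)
Your proposal is correct and follows essentially the paper's argument. The paper uses the same two ingredients: the size bound, which it cites as \cref{lem:cutoff} and which is equivalent to your total-degree bound $a+b\ge|\lambda|$, and the divisibility of the content product by $t^{\Dur(\lambda)}$, together with the observation that every nonconstant term of $Z_A$ has positive $t$-degree, so omitted partitions cannot re-enter through products in the logarithm. Your explicit expansion of $\log(1+W)$, with each factor coefficient bounded by $a_i\le k$, $b_i\le m$ and with $n\le k$, simply spells out the paper's one-line justification.
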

\begin{proof}
The size bound is \cref{lem:cutoff}. A partition with Durfee rank greater
than $k$ has no term of $t$-degree at most $k$, because all remaining factors
in \eqref{eq:weight-content} have nonnegative $t$-degree. In the logarithm,
each nonconstant factor has positive $t$-degree, so omitted terms cannot
return through products to affect a smaller $t$-degree. All operations may
therefore be performed with the stated finite truncation.
\end{proof}

The order of operations in this proposition matters. For $k>1$, taking a
logarithm involves products of partition contributions; one must not
replace it by the sum of logarithms of individual weights.

\subsection{The hook formula for the first regular jet}
Only hooks have Durfee rank one. Write a hook as
$\lambda=(a+1,1^b)$, with $a,b\ge0$, and let $d=a+b+1$. Its content
polynomial satisfies
\begin{equation}\label{eq:hookcontent}
 [t]\prod_{\square\in\lambda}(t+\h c(\square))
 =(-1)^ba!b!\h^{a+b}.
\end{equation}
The row differences for a hook telescope further. At $t=0$,
\begin{equation}\label{eq:hookK}
 \widetilde K_{a,b}(0,\h)
 =\frac{A'(\h(a+1/2),\h)-A'(-\h(b+1/2),\h)}\h
   -dA_0''(0).
\end{equation}

\begin{proposition}[All order first jet expression]\label[proposition]{prop:hook}
Let $\rho_1=v(0)$. Then
\begin{equation}\label{eq:allhook}
 [t](\cC-V)=
 \sum_{a,b\ge0}\rho_1^{a+b+1}(-1)^ba!b!\h^{a+b}
              e^{\widetilde K_{a,b}(0,\h)}-\rho_1.
\end{equation}
This is a well defined even series with zero constant term. In particular,
\begin{equation}\label{eq:hook2}
 [t\h^2](\cC-V)
 =\rho_1\left(A_1''(0)+\frac{A_0^{(4)}(0)}{24}\right)
   +2\rho_1^2A_0'''(0)+3\rho_1^3.
\end{equation}
\end{proposition}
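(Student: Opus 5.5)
The plan is to start from the logarithmic splitting \eqref{eq:logsplit} in the proof of \cref{thm:strong}, namely $\cC-V=\log Z_A-\log P(\rho)$ with $Z_A,P(\rho)\in1+tK[[t,\h]]$. Since $\log(1+x)=x+O(x^2)$ and $x$ is divisible by $t$, the quadratic and higher terms do not affect the coefficient of $t$. Therefore
\[
[t](\cC-V)=[t]Z_A-[t]P(\rho).
\]
Only $\lambda=(1)$ contributes to $[t]P(\rho)$, and since $\rho=v(t)t$ this gives $[t]P(\rho)=v(0)=\rho_1$. For $[t]Z_A$ we use the Durfee argument of \cref{prop:jetcutoff}. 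The content polynomial is divisible by $t^{\Dur(\lambda)}$, and the other factors $v(t)^{|\lambda|}$ and $e^{\widetilde K_\lambda}$ have nonnegative $t$-degree. Hence only hooks contribute, and for a hook the coefficient is $v(0)^d\,[t]\bigl(\prod_{\square\in\lambda}(t+\h c(\square))\bigr)\,e^{\widetilde K_\lambda(0,\h)}$.

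Next we verify the two hook formulas. The contents of $(a+1,1^b)$ are $0,1,\dots,a$ and $-1,\dots,-b$. The $t$-linear coefficient of $t\prod_{j\le a}(t+\h j)\prod_{j\le b}(t-\h j)$ is therefore $a!(-1)^bb!\h^{a+b}$, which is \eqref{eq:hookcontent}.

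For \eqref{eq:hookK} we use the row coordinates \eqref{eq:rowcoords}. Row $1$ has $u_1=a+\tfrac12$ and $v_1=-\tfrac12$. Rows $i=2,\dots,b+1$ have $u_i=\tfrac32-i=v_{i-1}$ and $v_i=\tfrac12-i$. The row differences of $A'$ therefore telescope to $A'(\h(a+\tfrac12))-A'(-\h(b+\tfrac12))$. Substituting this into \eqref{eq:normalized-K} gives \eqref{eq:hookK}, and summing over hooks gives \eqref{eq:allhook}.

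Well-definedness follows from two facts. First, $\widetilde K\in\h K[[t,\h]]$ by \eqref{eq:K-positive}. Second, the factor $\h^{a+b}$ means that only finitely many hooks contribute at each order in $\h$. The constant term vanishes because the $(0,0)$ hook contributes $\rho_1e^{0}=\rho_1$, which cancels the subtracted $\rho_1$. Evenness in $\h$ is inherited from \cref{thm:strong}.

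The main obstacle is the $\h^2$ bookkeeping in \eqref{eq:hook2}, which I will do by Taylor expansion of $A'=A_0'+\h^2A_1'+\cdots$ at $0$. There are three groups of hooks.
\begin{itemize}
\item Size one, $(a,b)=(0,0)$. Here $\widetilde K_{0,0}=\h^2\bigl(A_1''(0)+A_0^{(4)}(0)/24\bigr)+O(\h^4)$, so this hook contributes $\rho_1\bigl(A_1''(0)+A_0^{(4)}(0)/24\bigr)$.
\item Size two, $(a,b)=(1,0)$ and $(0,1)$. We need the linear term of $\widetilde K$. Expanding $A_0'$ gives $\widetilde K_{1,0}=\h A_0'''(0)+O(\h^2)$ and $\widetilde K_{0,1}=-\h A_0'''(0)+O(\h^2)$. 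With the signs $+\rho_1^2\h$ and $-\rho_1^2\h$ from \eqref{eq:hookcontent}, the two hooks together give $2\rho_1^2A_0'''(0)$.
\item Size three, $(2,0)$, $(1,1)$ and $(0,2)$. Only the leading value $e^{0}=1$ matters here. The content coefficients $2,-1,2$ then give $3\rho_1^3$.
\end{itemize}
Adding the three groups yields \eqref{eq:hook2}.
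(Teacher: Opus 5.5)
Your proposal is correct and follows essentially the same route as the paper's proof. Both arguments use $[t]\log Z_A=[t](Z_A-1)$ and $[t]\log P(\rho)=\rho_1$, reduce to hooks, and do the same size one/two/three count at order $\hbar^2$. Your version additionally writes out the Durfee-rank reduction to hooks and checks the hook content and telescoping formulas, which the paper states just before the proposition without proof.
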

\begin{proof}
Since $Z_A\in1+tK[[t,\h]]$, its logarithm and $Z_A-1$ have the same
coefficient of $t$. Equations \eqref{eq:hookcontent} and \eqref{eq:hookK}
therefore give the sum in \eqref{eq:allhook}. Also
$[t]\log P(\rho)=\rho_1$. Finite coefficient validity follows because only
$a+b\le m$ can contribute to the coefficient of $\h^m$.

At degree $\h^2$, the size one hook contributes
$\rho_1(A_1''+A_0^{(4)}/24)$. The two size-two hooks have opposite leading
content coefficients; their first regular exponent terms together give
$2\rho_1^2A_0'''$. The three size-three hooks have constant exponent one
at $\h=0$, and their signed factorial coefficients sum to
$2-1+2=3$. Larger hooks do not contribute. This proves \eqref{eq:hook2}.
Evenness and the zero constant term also follow from \cref{thm:strong}.
\end{proof}

\subsection{Constructive determination of the holomorphic ambiguity}
Let $\mathscr R_g(X,S)$ denote the common differential ring representative;
its conifold evaluation is $R_g$. At large radius,
$X=1-27Q+O(Q^2)$ and $S=3Q+O(Q^2)$. Put
\begin{equation}\label{eq:theta-reconstruction}
 \Theta=X(X-1)\partial_X+
 \left(-S^2+\frac{X-1}{3}S-\frac{X(X-1)}9\right)\partial_S.
\end{equation}
Theorem~1.8 of \cite{BFGW}, with
$F_{g,1}=I_{11}^{-1}\theta F_g$, gives
\begin{equation}\label{eq:HAE-reconstruction}
 \partial_S\mathscr R_g=\frac3{2X}
          \sum_{j=1}^{g-1}(\Theta\mathscr R_j)(\Theta\mathscr R_{g-j}),
 \qquad \Theta\mathscr R_1=-\frac X{24}.
\end{equation}
The last expression specifies the genus one input without requiring a
polynomial representative for its logarithm. Let $\mathsf P_g$ be the
$S$-antiderivative of the right side with zero $S$-constant term. The
known finite generation and regularity constraints imply
\begin{equation}\label{eq:ambiguity-space}
 \mathscr R_g=\mathsf P_g+h_g(X),\qquad
 h_g\in K[X]_{\le m},\qquad m=2g-2.
\end{equation}
Indeed the ambiguity in the modular presentation is
$\mC^{-m}K[\mA,\mC]_{3m}$, with weights $1,3$, whose basis is precisely
$1,X,\ldots,X^m$. This is the ambiguity space already identified in
\cite[Section~1.6]{BFGW}.

\begin{proposition}[Explicit reconstruction after the gap]\label[proposition]{prop:reconstruction}
For every $g\ge2$, the lower genus potentials, the gap of
\cref{thm:main}, and $\mathscr R_g(1,0)=0$ determine $\mathscr R_g$
uniquely. More explicitly, define $\alpha_1,\ldots,\alpha_m$ by
\begin{equation}\label{eq:ambiguity-algorithm}
 \operatorname{PP}_{\xi=0}\!
 \left(r_gt(\xi)^{-m}-\mathsf P_g(\xi^{-1},S_c(\xi))\right)
     =\sum_{j=1}^m\alpha_j\xi^{-j},\qquad
 \mathsf H_g(X)=\sum_{j=1}^m\alpha_jX^j.
\end{equation}
Then
\begin{equation}\label{eq:reconstruction-result}
 \mathscr R_g=\mathsf P_g+\mathsf H_g
                       -\mathsf P_g(1,0)-\mathsf H_g(1).
\end{equation}
\end{proposition}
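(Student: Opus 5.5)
The proof splits into existence and uniqueness. Both reduce to one claim: the only freedom left by the holomorphic anomaly equation \eqref{eq:HAE-reconstruction} is an element $h_g\in K[X]_{\le m}$, as in \eqref{eq:ambiguity-space}. The gap of \cref{thm:main} and the normalization $\mathscr R_g(1,0)=0$ then fix the $m+1$ coefficients of $h_g$ triangularly.

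First, the lower genus representatives $\mathscr R_1,\ldots,\mathscr R_{g-1}$ determine the right side of \eqref{eq:HAE-reconstruction}, hence $\mathsf P_g$. By \eqref{eq:ambiguity-space}, the true representative has the form $\mathscr R_g=\mathsf P_g+h_g(X)$ with $h_g=\sum_{j=0}^m\beta_jX^j$. Under conifold evaluation \eqref{eq:evaluation}, $X^c=\xi^{-1}$, so $h_g(X^c)=\sum_j\beta_j\xi^{-j}$. Its principal part in $\xi$ is exactly $\sum_{j\ge1}\beta_j\xi^{-j}$, and the map $(\beta_1,\ldots,\beta_m)\mapsto$ principal part is a bijection onto Laurent polynomials in $\xi^{-1}$ of degree $\le m$ without constant term.

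Next, $t$ is an analytic coordinate with $t=\xi/\sqrt3+O(\xi^2)$ by \eqref{eq:T-expansion}. Hence "principal part in $t$" and "principal part in $\xi$" have the same vanishing locus, and a germ is holomorphic in $t$ iff it is holomorphic in $\xi$. \Cref{thm:main} says $R_g-r_gt^{-m}$ is holomorphic, that is,
\[
\operatorname{PP}_{\xi=0}\bigl(\mathsf P_g(\xi^{-1},S_c(\xi))+h_g(\xi^{-1})-r_gt(\xi)^{-m}\bigr)=0 .
\]
This forces $\sum_{j\ge1}\beta_j\xi^{-j}=\operatorname{PP}_{\xi=0}(r_gt^{-m}-\mathsf P_g)$, i.e.\ $\beta_j=\alpha_j$ for $j\ge1$ as in \eqref{eq:ambiguity-algorithm}. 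Two checks belong here. The principal part on the left has pole order $\le m$, so it really lies in the span of $\xi^{-1},\ldots,\xi^{-m}$; this follows from the meromorphicity bound after finite generation, together with the fact that $S_c$ is holomorphic in $\xi$ apart from the $\xi^{-1}$ term in \eqref{eq:evaluation}. And $\mathsf P_g(\xi^{-1},S_c)$ is a polynomial in $\xi^{-1}$ and $S_c$, hence a meromorphic germ in $\xi$.

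Finally, the constant $\beta_0$ is invisible to the gap. It is fixed by evaluating at the large radius point $Q=0$, where $X=1$ and $S=0$. The condition $\mathscr R_g(1,0)=0$ reflects the absence of degree zero terms in \eqref{eq:R-large} for $g\ge2$, and gives $\beta_0=-\mathsf P_g(1,0)-\mathsf H_g(1)$, which is \eqref{eq:reconstruction-result}.

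Uniqueness follows from the same linear algebra. The difference of two candidates is $h(X)\in K[X]_{\le m}$ with holomorphic conifold evaluation $h(\xi^{-1})$, so all $\beta_j$ with $j\ge1$ vanish; and $h(1)=0$ kills $\beta_0$. The main obstacle I expect is justifying \eqref{eq:ambiguity-space} itself, namely that the integration "constant" in $S$ is a polynomial in $X$ of degree at most $m$ rather than something more general. I would argue as the text does: write the kernel of $\partial_S$ on $\mC^{-m}\QQ[\mA,\mB,\mC]_{3m}$ as $\mC^{-m}K[\mA,\mC]_{3m}$ using \eqref{eq:ring-mod}, where $\mB$ enters only through $S$. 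Then check by weight counting that its basis is $\mA^{3k}\mC^{m-k}/\mC^m=X^k$ for $0\le k\le m$. A secondary point is the compatibility of the genus one input $\Theta\mathscr R_1=-X/24$ with $\Theta=D^c$-type scaling, which I would take directly from \cite{BFGW}.
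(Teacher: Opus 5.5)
Your proposal is correct and follows the paper's argument. The ambiguity $h_g\in K[X]_{\le m}$ evaluates at the conifold to $\sum_j\beta_j\xi^{-j}$, so the principal-part map is the identity in the $\xi$ basis: the gap fixes $\beta_1,\ldots,\beta_m$ as the $\alpha_j$, and $\mathscr R_g(1,0)=0$ fixes $\beta_0$. The paper also records the triangular determinant $3^{-m(m+1)/4}$ in the $t$ basis; your remark that holomorphy in $t$ and in $\xi$ coincide makes that step unnecessary.
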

\begin{proof}
On the quotient $K[X]_{\le m}/K$, the principal part map in the $t$
coordinate is triangular in the bases $X,\ldots,X^m$ and
$t^{-1},\ldots,t^{-m}$. Since $\xi=\sqrt3t+O(t^2)$, its determinant is
\begin{equation}\label{eq:ambiguity-determinant}
 \prod_{j=1}^m3^{-j/2}=3^{-m(m+1)/4}\ne0.
\end{equation}
Here one may take $K=\CC$, or an appropriate characteristic zero field
containing $\sqrt3$. Thus the gap fixes the ambiguity modulo a constant.
The remaining constant is fixed by its value at $(1,0)$.
In the $\xi$ coordinate the same map has the basis $\xi^{-1},\ldots,
\xi^{-m}$, and is the identity. Taking its coefficients yields
\eqref{eq:ambiguity-algorithm}; adjusting the constant gives
\eqref{eq:reconstruction-result}. Existence follows from the actual
relative potential and its proven gap, not from assuming an arbitrary
formal integration satisfies the geometric constraints.
\end{proof}

The conditional uniqueness observation is already in \cite{BFGW}. The
role of the present result is to make its gap input available, not to
claim a new anomaly equation or a new general principle of direct
integration. In particular, this consequence fits into the established
boundary-data method of \cite{HKGauge,HKQ,GKMW,HKR}. At fixed genus, only finitely many coefficients of the
period and $S_c$ are required by \eqref{eq:ambiguity-algorithm}.

The accompanying exact computation reconstructs genera two through four.
For example, the $S$-independent term at genus four is
\begin{align}\label{eq:h4}
 h_4(X)&=-\frac{X-1}{12541132800}\Bigl(
     29725884X^5-80836858X^4+76970853X^3\notag\\
   &\hspace{40mm}{}-29502608X^2+3690240X-44928\Bigr).
\end{align}
The full $S$-polynomial is included in the exact companion data.

For a check not used in this reconstruction, the degree one sheaf space
is the dual projective plane: a pure stable sheaf of degree one and
Euler characteristic one is $\mathcal O_L$ for a line $L$. Its centered
cohomology polynomial is $y^{-1}+1+y$. The degree one case of the
sheaves/relative GW correspondence \cite{Bousseau,BFGW} therefore gives
\begin{equation}\label{eq:independent-degree-one}
 \sum_{g\ge0}(-1)^g[Q]\mathscr R_g\,\h^{2g-1}
       =\frac{1+2\cos\h}{2\sin(\h/2)}.
\end{equation}
There is no multiple cover ambiguity in degree one. The coefficients
computed from the reconstructed polynomial, using
$[Q]\mathscr R_g=(-27\partial_X+3\partial_S)\mathscr R_g|_{(1,0)}$,
are respectively
\begin{equation}\label{eq:degree-one-values}
 [Q]\mathscr R_2=\frac{29}{640},\qquad
 [Q]\mathscr R_3=\frac{137}{322560},\qquad
 [Q]\mathscr R_4=\frac{41}{7372800}.
\end{equation}
They agree exactly with \eqref{eq:independent-degree-one}.
The reconstructed genus four gap is \emph{not} an independent test of
the gap, because it was used in the construction. Equation
\eqref{eq:independent-degree-one}, which was not an input, is a separate
check on the resulting large radius data.

\subsection{Genus two from independent geometric expressions}
The genus two formulas used here are established expressions in the
large radius differential ring \cite[Theorem~1.5 and Section~4.6]{BFGW}:
\begin{align}\label{eq:R2source}
 R_2&=\frac{XS}{384}-\frac{X^2}{360}+\frac{X}{240}-\frac1{720},\\
 U_2&=X^{-1}\left(\frac58S^3+\frac X8S^2+\frac{X^2}{96}S
             +\frac{X^3}{4320}+\frac{X^2}{4320}-\frac X{2160}\right).
 \label{eq:U2source}
\end{align}
They are evaluated using \eqref{eq:evaluation}; no conifold gap is imposed
to determine their coefficients. From \eqref{eq:PF},
\begin{equation}\label{eq:Sseries}
 S^c=-\frac1{3\xi}+\frac19+\frac{2}{81}\xi
                 +\frac{28}{2187}\xi^2+O(\xi^3).
\end{equation}
For example, the relative expression gives
\[
 R_2=-\frac7{1920}\xi^{-2}+\frac{77}{17280}\xi^{-1}+O(1).
\]
If $f=a\xi^{-2}+b\xi^{-1}+O(1)$, \eqref{eq:T-expansion} gives
\[
 f=\frac a3t^{-2}+\frac{11a/9+b}{\sqrt3}t^{-1}+O(1).
\]
For the displayed $a,b$, the second coefficient is zero, recovering
$R_2=-7/(5760t^2)+O(1)$.

There is a stronger check on the regular terms. Direct expansion of
\eqref{eq:R0-R1}, with the universal singular pieces subtracted, yields
\begin{equation}\label{eq:derivative-values}
 \bR_0'''(0)=-\frac{\sqrt3}{18},\qquad
 \bR_0^{(4)}(0)=\frac1{324},\qquad
 \bR_1''(0)=\frac1{7776}.
\end{equation}
Thus \eqref{eq:finite2} predicts
\begin{equation}\label{eq:finite-value}
 \FP(U_2-R_2)=\frac1{24\cdot7776}
                   +\frac7{5760\cdot324}
             =\frac{17}{1866240}.
\end{equation}
Expansion of \eqref{eq:R2source}--\eqref{eq:U2source}, independently of the
vacuum formula, gives exactly
\begin{equation}\label{eq:C2-explicit}
 \cC_2=-\frac{17}{5760t^2}+\frac{17}{1866240}
        -\frac{521\sqrt3}{12597120}t+O(t^2).
\end{equation}
The regular vacuum part has the same linear coefficient. Equivalently,
using $A_0=\bR_0$, $A_1=-\bR_1$, and $\rho_1=\sqrt3/27$ in
\eqref{eq:hook2} gives
\begin{align*}
 [t]\cN_2
 &=\frac{\sqrt3}{27}
       \left(-\frac1{7776}+\frac1{24\cdot324}\right)
   +2\left(\frac{\sqrt3}{27}\right)^2
       \left(-\frac{\sqrt3}{18}\right)
   +3\left(\frac{\sqrt3}{27}\right)^3=0.
\end{align*}
In this specific example $\cN_2=O(t^2)$, stronger than the general $O(t)$
statement. It is worth mentioning that we do not infer a general all genus strengthening from this
low genus cancellation.

\subsection{Reproducibility}
Supplementary material~1 (\texttt{ESM\_1.zip}) contains exact arithmetic programs for the
elliptic cumulants, the content product identity, the cusp normalization,
the finite part relation and the reconstruction through genus four.
For example, with
\[
 \rho=2t(1+t),\qquad
 A=\frac{t^3}{3}+\frac{t^4}{5}
   +\h^2\left(\frac{t^2}{7}+\frac{t^3}{11}\right)
   +\frac{\h^4t^2}{13},
\]
the direct shifted-power-sum expansion and the content product expansion
agree for all $t^k\h^m$ with $k\le3$, $m\le6$, using the $97$ partitions
of size at most nine. Both give
$[t\h^2](\cC-V)=1434/35$. The size cutoff is justified by
\cref{lem:cutoff}, so each comparison is finite.

The verification also distinguishes normalization errors which are
invisible to the polar part alone. For $R_0=f_0$, $R_1=(1/24)\log t$ and
$\rho=\nu t$, omitting the factor $1/2!$ in the two-insertion cumulant
changes the finite part of $\cC_2$ by $\nu^2$ without changing its polar
part. Thus the finite part identity provides a separate test of the
geometric normalization. The proofs of the regularity and transfer
theorems do not depend on these computations.

\medskip
\noindent\textbf{Supplementary material 1.} Exact arithmetic verification scripts,
input formulas, reconstructed genus two to genus four polynomials, and
recorded test outputs for the completed cycle transfer.

\section*{Statements and declarations}
\noindent\textbf{Data and code availability.}
The exact arithmetic verification programs, their input formulas,
reconstructed polynomial data and recorded outputs are supplied in the
accompanying Supplementary material~1 (\texttt{ESM\_1.zip}). No empirical data sets are used.
The mathematical proofs do not depend on the execution of these programs.

\medskip\noindent\textbf{Funding.} The author is supported by the National Natural Science Foundation of China
(NSFC) under Grant Nos.~11501470, 11426187, and 11791240561, and is partially
supported by NSFC Grant No.~11671328, the Chengdu Science and Technology
Program under Grant No.~2025-YF09-00007-SN, and the Fundamental Research
Funds for the Central Universities under Grant Nos.~2682021ZTPY043,
2682025ZTPY001, 2682025ZTPY057, and 2682025ZTO002.

\medskip\noindent\textbf{Competing interests.} The author has no relevant financial or non-financial interests to disclose.

\section*{Acknowledgement.} 
The author would like to thank Sung-Soo Kim, Kimyeong Lee, Futoshi Yagi,
Satoshi Nawata, and Rui-Dong Zhu for useful discussions and collaborations in
mathematical physics. The author is particularly grateful to Andrea Brini, Todar Milanov 
and Arpan Saha for inspiring online seminars during the pandemic, and to Doan Nhat Minh and Van
Nguyen for enjoyable and fruitful collaborations. The author also thanks
Bohui Chen, An-Min Li, and Guosong Zhao for their constant supports, as well
as friends and colleagues for stimulating discussions at various conferences.
Finally, the author gratefully acknowledges the hospitality and support of the
Mainz Institute for Theoretical Physics (MITP) of the Cluster of Excellence
PRISMA+ (Project ID 390831469).

\end{document}